\documentclass[12pt]{amsart}
\usepackage[english]{babel}
\usepackage[utf8]{inputenc}
\usepackage[T1]{fontenc}
\usepackage[nomath]{lmodern} 

\usepackage{amssymb,amsmath,amsthm,amsfonts}
\usepackage{thmtools,mathtools}
\usepackage{microtype}
\allowdisplaybreaks

\usepackage[dvipsnames]{xcolor}
\usepackage{graphicx} 
\usepackage{tikz,tikz-cd,pgfplots}
\pgfplotsset{compat=newest}

\usepackage{caption} 
\usepackage{fmtcount} 

\usepackage[breaklinks,pdfencoding=auto,psdextra]{hyperref}
\usepackage{enumitem} 

\def\Macaulay{{\tt Macaulay2}}
\usepackage{listings}
\makeatletter
\lst@CCPutMacro
\lst@ProcessLetter{"27}\textquotesingle
\lst@ProcessOther{"2D}{-}
\@empty\z@\@empty
\makeatother
\lstdefinelanguage{Macaulay2}{
comment=[l]{--},
alsoletter={'},
alsoother={_},
}
\lstset{
frame=l,
framesep=1em,
xleftmargin=2em,
basicstyle=\scriptsize\fontfamily{fvm}\selectfont,
commentstyle=\itshape\color{gray},
}

\theoremstyle{plain}
\newtheorem{theorem}{Theorem}[section]
\newtheorem{lemma}[theorem]{Lemma}
\newtheorem{corollary}[theorem]{Corollary}
\newtheorem{proposition}[theorem]{Proposition}
\theoremstyle{definition}
\newtheorem{definition}[theorem]{Definition}

\newtheorem{remark}[theorem]{Remark}

\newcounter{chr}
\setcounter{chr}{1}
\loop\ifnum\value{chr}<27
  \expandafter\edef\csname b\Alph{chr}\endcsname{\noexpand\mathbf{\Alph{chr}}}
  \expandafter\edef\csname c\Alph{chr}\endcsname{\noexpand\mathcal{\Alph{chr}}}
  \expandafter\edef\csname f\Alph{chr}\endcsname{\noexpand\mathfrak{\Alph{chr}}}
\addtocounter{chr}{1}
\repeat

\def\m{\mathfrak m}
\def\p{\mathfrak p}
\def\q{\mathfrak q}

\def\Aut{\operatorname{Aut}}

\def\Def{\operatorname{Def}}

\def\disc{\operatorname{disc}}

\def\div{\operatorname{div}}
\def\End{\operatorname{End}}
\def\Ext{\operatorname{Ext}}

\def\GL{\operatorname{GL}}
\def\Gr{\operatorname{Gr}}

\def\Id{\operatorname{Id}}

\def\O{\operatorname{O}}

\def\PSL{\operatorname{PSL}}
\def\Pic{\operatorname{Pic}}

\def\rk{\operatorname{rk}}
\def\Sing{\operatorname{Sing}}
\def\SL{\operatorname{SL}}

\def\Sym{\operatorname{Sym}}

\def\KKK{{\mathrm{K3}}}
\def\trans{{\mathrm{trans}}}

\let\ordexists\exists
\def\exists{\operatorname{\ordexists}}
\let\ordforall\forall
\def\forall{\operatorname{\ordforall}}

\def\inner#1{{\left<{#1}\right>}}
\def\set#1{{\left\{{#1}\right\}}}
\def\setmid#1#2{{\left\{{#1}\;\middle|\;{#2}\right\}}}

\def\tilde{\widetilde}
\def\setminus{\smallsetminus}

\def\git{/\!\!/}

\def\bw#1{{\mathchoice%
 {\textstyle{\bigwedge\mkern-4.5mu^{#1}\mkern1mu}}%
 {\textstyle{\bigwedge\mkern-4.5mu^{#1}\mkern1mu}}%
 {\scriptstyle{\bigwedge\mkern-5mu^{#1}}}%
 {\scriptscriptstyle{\bigwedge\mkern-5mu^{#1}}}%
}}

\def\longarrow#1#2{\mathchoice{#2}{#1}{#1}{#1}}
\def\to{\longarrow{\rightarrow}{\longrightarrow}}
\def\simto{\longarrow{\xrightarrow\sim}{\stackrel\sim\longrightarrow}}
\def\into{\longarrow{\hookrightarrow}{\lhook\joinrel\longrightarrow}}

\let\shortmapsto\mapsto
\def\mapsto{\longarrow{\shortmapsto}{\longmapsto}}

\usepackage[margin=1in]{geometry}

\setcounter{MaxMatrixCols}{21} 
\def\bsigma{{\sigma_0}}
\def\Co{\operatorname{Co}}
\def\Stab{\operatorname{Stab}}
\def\KS{\operatorname{KS}}
\def\GG{\bG}
\def\Fano{X_{\GG}^{\text{Fano}}}
\def\EPW{X_{\GG}^{\text{EPW}}}
\def\DV{X_{\GG}^{\text{DV}}}
\def\MDV{\cM_{\mathrm{DV}}}
\def\MDVsm{\MDV^{\mathrm{sm}}}

\begin{document}

\title{A special Debarre--Voisin fourfold}
\author[J.~Song]{Jieao Song}
\address{Université Paris Cité, CNRS, IMJ-PRG, 75013 Paris, France}
\email{\href{mailto:jieao.song@imj-prg.fr}{\tt jieao.song@imj-prg.fr}}
\date{\today}
\begin{abstract}
Consider the finite simple group~$\GG\coloneqq\PSL(2,\bF_{11})$ of order 660,
which has an irreducible
representation~$V_{10}$ of dimension 10. In this note, we study a special
trivector $\bsigma\in \bw3V_{10}^\vee$ that is
$\GG$-invariant. Following the construction of Debarre--Voisin, we obtain a
smooth hyperkähler fourfold $X_6^\bsigma\subset\Gr(6,V_{10})$ with many
symmetries. We will also look at the associated Peskine variety
$X_1^\bsigma\subset \bP(V_{10})$, which is highly symmetric as well and admits
55 isolated singular points. It will help us to better understand the geometry of
the special Debarre--Voisin fourfold~$X_6^\bsigma$.
We also discuss an application of this example to the global geometry of
the moduli space of Debarre--Voisin fourfolds.
\end{abstract}

\maketitle

\section{Introduction}
The study of automorphism groups for K3 surfaces and higher dimensional
hyperkähler manifolds is a rich subject that has many deep relations with
lattice theory and representation theory of simple groups. For example,
in \cite{Mukai:AutK3}, Mukai showed that a finite group of symplectic automorphisms of
a K3 surface is always a subgroup of the Mathieu group $M_{23}$.
Similarly, in \cite[Theorem~7.2.4]{monthesis}, Mongardi showed that a finite group of
symplectic automorphisms of a hyperkähler manifold of $\KKK^{[2]}$-type is a
subgroup of the Conway group $\Co_1$. Moreover, for a such manifold $X$, the
maximal prime order of any symplectic automorphism is 11, and in this case, $X$
must have maximal Picard rank 21, so it is isolated in the moduli.

Consider the finite simple group~$\GG\coloneqq\PSL(2,\bF_{11})$ of order 660.
Mongardi constructed a special cubic fourfold, as well as a special
Eisenbud--Popescu--Walter sextic with a faithful $\GG$-action. From these, one
obtains two hyperkähler fourfolds of $\KKK^{[2]}$-type---the corresponding Fano
variety of lines and double EPW sextic---that are highly
symmetric (see~\cite[Section~4.5]{monthesis} and~\cite{dm}). We also note
that a complete classification of symplectic automorphism groups for cubic
fourfolds is available in~\cite{LazaZheng}.

In this paper, we study an explicit example of a hyperkähler fourfold of
$\KKK^{[2]}$-type in the Debarre--Voisin family that also admits a faithful
$\GG$-action. A key feature of this example is that we can describe explicitly
its Picard lattice using the geometry of some associated Fano varieties.

Let~$V_{10}$ be a 10-dimensional complex vector space. A {\em
Debarre--Voisin variety} $X_6^\sigma$ is defined inside the Grassmannian
$\Gr(6,V_{10})$ from the datum of a trivector $\sigma\in \bw3V_{10}^\vee$. By
studying the representations of the group~$\GG$, it is not hard to find a
candidate for the special trivector~$\bsigma$: denote by~$V_{10}$ one of the two $10$-dimensional
irreducible representations of $\GG$; there exists a unique (up to
multiplication by a scalar) trivector $\bsigma\in\bw3V_{10}^\vee$ that
is~$\GG$-invariant.

Using the general results obtained in~\cite{bs} on the geometry of
Debarre--Voisin varieties and associated Peskine varieties, one can study in
detail the geometry of this special Debarre--Voisin variety~$X_6^\bsigma$. We
prove the following results.
\begin{theorem}Let~$\bsigma\in \bw3V_{10}^\vee$ be the special $\GG$-invariant
trivector.
\begin{enumerate}
\item (Proposition~\ref{prop:smoothness}) The Debarre--Voisin variety
$X_6^\bsigma\subset\Gr(6,V_{10})$ is smooth of dimension~$4$.
\item (Proposition~\ref{prop:55-points}) The associated Peskine variety
$X_1^\bsigma\subset\bP(V_{10})$ has $55$ isolated singular points. The group
$\GG$ acts transitively on them.
\item (Proposition~\ref{prop:aut}) The group $\Aut^s_H(X_6^\bsigma)$ of symplectic
automorphisms that fix the polarization~$H$ on $X_6^\bsigma$ is isomorphic
to~$\GG$.
\item One can give an explicit description of the Picard lattice of
$X_6^\bsigma$, which has maximal rank $21$. It is spanned by $55$ $(-2)$-classes
(see \eqref{eq:picX6} for the Gram matrix). Moreover,
if we denote by $H^2_{\trans}(X_6^\bsigma)$ the transcendental lattice and by
$T\coloneqq H^2(X_6^\bsigma,\bZ)^\GG$ the $\GG$-invariant sublattice,
we have the following isomorphisms of lattices (Proposition~\ref{prop:lattices})
\begin{gather*}
H^2_{\trans}(X_6^\bsigma)\simeq L_{11}\coloneqq\begin{psmallmatrix}2 & 1 \\1 & 6 \end{psmallmatrix},
\qquad T=H^2_{\trans}(X_6^\bsigma)\oplus\inner{H}\simeq
L_{11}\oplus(22),\\
\Pic(X_6^\bsigma)\simeq
U\oplus E_8(-1)^{\oplus 2}\oplus L(-1),
\end{gather*}
where the component $L$ can be taken to be both
$\begin{psmallmatrix}2&1&0\\1&2&1\\0&1&8\end{psmallmatrix}$
and $L_{11}\oplus(2)$.
\item (Proposition~\ref{prop:uniqueness}) $X_6^\bsigma$ can be characterized as the unique Debarre--Voisin fourfold admitting a symplectic automorphism of order~$11$.
\item $X_6^\bsigma$ is birationally isomorphic to the Hilbert square of a K3 surface (Proposition~\ref{prop:S2}) and is special in the sense of Hassett for all possible discriminants $d\ge 24$ (Proposition~\ref{prop:disc}).
\end{enumerate}
\end{theorem}

The property of being Hassett special for all possible discriminants $d\ge 24$ has a nice implication on the global geometry of the moduli space of Debarre--Voisin fourfolds. Namely, we have two different moduli spaces in this setting: the GIT moduli space $\MDV$ of trivectors and the moduli space $\cM_{22}^{(2)}$ of polarized hyperkähler manifolds. The Debarre--Voisin construction provides a rational map
\[
\m\colon \MDV\dashrightarrow \cM_{22}^{(2)},
\]
which is proved to be birational \cite[Theorem~1.8]{ogrady}. Moreover, one can show that the restriction of $\m$ to the open locus $\MDVsm$ of trivectors defining a smooth Debarre--Voisin fourfold is an open immersion (Proposition~\ref{prop:m_immersion}).

When we resolve the indeterminacies of this map, the image of each exceptional divisor is called \emph{Hassett--Looijenga--Shah} (HLS) (see Definition~\ref{def:HLS}),
which reflects a difference between the GIT and the Baily--Borel compactifications.
The result on $X_6^\bsigma$ implies that all Heegner divisors $\cD_d$ for $d\ge 24$ are not HLS (Corollary~\ref{cor:HLS}). Combined with the results of \cite{DHOV} and \cite{oberdieck}, one concludes that a Heegner divisor $\cD_d$ is HLS if and only if $d\in\set{2,6,8,10,18}$. We discuss this in Section~\ref{sec:HLS}.

\subsection*{Notation}
We use $\sigma$ to denote a trivector and $\bsigma$ to denote the
special $\GG$-invariant trivector.

\subsection*{Acknowledgements}
I would like to express my sincere gratitude to my advisor, Olivier Debarre,
for his support and guidance. I thank Frédéric Han for his help on some
computations, Vladimiro Benedetti for interesting discussions, and Laurent Manivel and Giovanni
Mongardi for their interest in this work, as well as the referee for very helpful comments. I also
thank Grzegorz Kapustka for pointing to me some related works by his student,
Tomasz Wawak.

A major part of the results obtained in this note are based on computer
algebra. I would like to thank the developers of \Macaulay{} and {\tt Sage}
for making their software open-source and available to all.

\section{The special trivector}
\label{sec:trivector}
We first give the construction of the special trivector
$\bsigma\in\bw3V_{10}^\vee$.

The finite simple group $\GG\coloneqq \PSL(2,\bF_{11})$ of order $660$
admits eight different irreducible complex representations: two of them are of
dimension~$5$ and will be denoted by~$V_5$ and~$V_5^\vee$. They are the dual
to each other.

A classical result is that the symmetric
power~$\Sym^3V_5^\vee$---the space of cubic polynomials on~$V_5$---admits an
irreducible subrepresentation of dimension~$1$: for a suitable choice of basis
$(y_0,\dots,y_4)$ of~$V_5^\vee$, this corresponds to the Klein cubic with
equation
\[y_0^2y_1+y_1^2y_2+y_2^2y_3+y_3^2y_4+y_4^2y_0\in\Sym^3 V_5^\vee.\]
In~\cite{adl}, Adler showed that the automorphism group of this smooth cubic is
precisely the group~$\GG$.

The wedge product~$\bw2V_5$ gives another irreducible representation, of
dimension~$10$, which is self-dual and will be denoted by~$V_{10}$.  We consider
elements of~$\bw3V_{10}^\vee$.  A computation of
characters tells us that this representation of~$\GG$ also admits one
irreducible subrepresentation of dimension~$1$, generated by a
$\GG$-invariant trivector $\bsigma$. The characters of all eight irreducible
representations of~$\GG$ as well as the character of $\bw3V_{10}^\vee$ can be
found in Section~\ref{sec:appendix}, Table~\ref{tbl:PSL-char}. Note that the
other irreducible representation $V_{10}'$ of dimension~10 does not provide
$\GG$-invariant trivectors (see also Remark~\ref{rmk:end} on the uniqueness of the trivector $\bsigma$).

We now give a concrete description of the special trivector $\bsigma$ in terms
of coordinates in a suitable basis.
The subgroup~$\bB$ of~$\GG$ of upper triangular matrices can be
generated by the elements
\[
P=\begin{pmatrix}1&1\\0&1\end{pmatrix}\quad\text{and}\quad
R=\begin{pmatrix}4&0\\0&3\end{pmatrix},
\]
of the respective orders 11 and 5.
Write~$\zeta=e^{2\pi i/11}$ and $\rho\colon
\GG\to \GL(V_5^\vee)$ for the representation~$V_5^\vee$.  In a suitable
basis $(y_0,\dots,y_4)$ of~$V_5^\vee$, the matrices of~$P$ and~$R$ are
\begin{equation}\label{eq:PR}
\rho(P)=\begin{pmatrix}
\zeta^1&0&0&0&0\\
0&\zeta^9&0&0&0\\
0&0&\zeta^4&0&0\\
0&0&0&\zeta^3&0\\
0&0&0&0&\zeta^5\end{pmatrix}\quad\text{and}\quad
\rho(R)=\begin{pmatrix}
0&0&0&0&1\\
1&0&0&0&0\\
0&1&0&0&0\\
0&0&1&0&0\\
0&0&0&1&0\end{pmatrix}.
\end{equation}
Note that one can already identify the equation of the~$\GG$-invariant Klein
cubic using only these two elements, instead of the whole group~$\GG$.

The elements~$y_{ij}\coloneqq y_i\wedge y_j$ form a basis of~$V_{10}^\vee$.
In this basis, we see that~$P$ acts diagonally and~$R$ as a permutation
(see Table~\ref{tbl:PR}; note that we have chosen a particular order in which the
action of $R$ is very simple).
\begin{table}[ht]
\def\arraystretch{1.5}
$\begin{array}{|r|cccccccccc|}
\hline
&y_{01}&y_{12}&y_{23}&y_{34}&y_{40}&y_{02}&y_{13}&y_{24}&y_{30}&y_{41}\\
\hline
\text{Eigenvalues of }\bw2\rho(P)&\zeta^{10}&\zeta^2&\zeta^7&\zeta^8&\zeta^6&\zeta^5&\zeta^1&\zeta^9&\zeta^4&\zeta^3\\
\text{Action of }\bw2\rho(R)&y_{12}&y_{23}&y_{34}&y_{40}&y_{01}&y_{13}&y_{24}&y_{30}&y_{41}&y_{02}\\
\hline
\end{array}$
\caption{\label{tbl:PR}The action of~$P$ and~$R$ in the basis~$(y_{ij})$}
\end{table}
We may easily verify that the
space of trivectors invariant under the action of~$P$ and~$R$ is of dimension 2
and is spanned by the $\bB$-invariant trivectors
\begin{gather*}
\sigma_1\coloneqq y_{01}\wedge y_{23}\wedge y_{02}+
y_{12}\wedge y_{34}\wedge y_{13}+
y_{23}\wedge y_{40}\wedge y_{24}+
y_{34}\wedge y_{01}\wedge y_{30}+
y_{40}\wedge y_{12}\wedge y_{41},\\
\sigma_2\coloneqq y_{01}\wedge y_{41}\wedge y_{24}+
y_{12}\wedge y_{02}\wedge y_{30}+
y_{23}\wedge y_{13}\wedge y_{41}+
y_{34}\wedge y_{24}\wedge y_{02}+
y_{40}\wedge y_{30}\wedge y_{13}.
\end{gather*}
To identify the unique~$\GG$-invariant trivector, we must look at some elements
in~$\GG\setminus \bB$.  Since the explicit description for the
representation~$V_5$ is known \cite{atlas}, we will pick one such element and
compute its matrix explicitly.

The group~$\GG$ admits a presentation with two generators
$a,b$ and relations $a^2=b^3=(ab)^{11}=[a,babab]^2=1$. We can take
$a=\begin{psmallmatrix}0&-1\\1&0\end{psmallmatrix}$ and
$b=\begin{psmallmatrix} 0&1\\-1&-1\end{psmallmatrix}$.
One may check that $ab=P$ while $bbabababbabababb=R$.
Matrices for~$\rho(a)$ and~$\rho(b)$ are provided by~\cite{atlas}, so the
representation is completely determined.
Choose a suitable basis of~$V_5^\vee$ consisting of eigenvectors of
$\rho(ab)=\rho(P)$. In this basis,
the matrices of $P$ and $R$ are as in~\eqref{eq:PR}. Since the element~$a$
does not lie in the subgroup~$\bB$, we use its matrix in this new basis to
verify that the unique (up to multiplication by a scalar) $\GG$-invariant
trivector is~$\bsigma\coloneqq\sigma_1+\sigma_2$.

From now on, we will rewrite the basis~$(y_{ij})$ as $(x_0,\dots,x_9)$ in the
order chosen in Table~\ref{tbl:PR}, so the trivector $\bsigma$ is given by
\begin{gather*}
\bsigma=x_0\wedge x_2\wedge x_5+
x_1\wedge x_3\wedge x_6+
x_2\wedge x_4\wedge x_7+
x_3\wedge x_0\wedge x_8+
x_4\wedge x_1\wedge x_9\\+
x_0\wedge x_9\wedge x_7+
x_1\wedge x_5\wedge x_8+
x_2\wedge x_6\wedge x_9+
x_3\wedge x_7\wedge x_5+
x_4\wedge x_8\wedge x_6,
\end{gather*}
or more succinctly,
\begin{equation}\label{eq:sigma0}
\bsigma=[025]+[136]+[247]+[308]+[419]+[097]+[158]+[269]+[375]+[486].
\end{equation}
We have, therefore, shown the following result.
\begin{proposition}
Up to multiplication by a scalar, the trivector~$\bsigma$ in~\eqref{eq:sigma0}
is the unique~$\GG$-invariant trivector in~$\bw3V_{10}^\vee$, where~$V_{10}$ is
the $10$-dimensional irreducible $\GG$-representation given in
Table~\ref{tbl:PSL-char}.
\end{proposition}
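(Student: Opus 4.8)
The plan is to reduce $\bG$-invariance to invariance under an explicit generating set, and then to pin down the invariant by combining a character count with two short linear-algebra computations. Since $\bG=\langle a,b\rangle$ with $ab=P$, we have $b=a^{-1}P$, so $\bG=\langle P,R,a\rangle=\langle\bB,a\rangle$; a trivector is $\bG$-invariant precisely when it is fixed by the three elements $P$, $R$ and $a$. Before producing the invariant, I would first check at the level of characters that the invariant subspace is exactly one-dimensional, so that exhibiting a single nonzero invariant will suffice. From the character $\chi$ of $V_{10}$ listed in \autoref{tbl:PSL-char}, and using $V_{10}\cong V_{10}^\vee$, one computes the character of $\bw3V_{10}^\vee$ by the exterior-power formula
\[
\chi_{\bw3V_{10}^\vee}(g)=\tfrac16\bigl(\chi(g)^3-3\,\chi(g)\chi(g^2)+2\,\chi(g^3)\bigr),
\]
and then evaluates $\frac{1}{|\bG|}\sum_{g\in\bG}\chi_{\bw3V_{10}^\vee}(g)$, i.e.\ the multiplicity of the trivial representation. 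A class-by-class sum over the character table returns $1$, so $\dim\bigl(\bw3V_{10}^\vee\bigr)^{\bG}=1$.

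Next I would compute the $\bB$-invariants explicitly. As $\rho(P)$ is diagonal in the basis $(x_i)$ of $V_{10}^\vee$, the operator $\bw3\rho(P)$ is diagonal on the monomial basis $x_i\wedge x_j\wedge x_k$, with eigenvalue the product of the three eigenvalues recorded in \autoref{tbl:PR}; writing the eigenvalue of $x_i$ as $\zeta^{e_i}$, a monomial is $P$-invariant exactly when $e_i+e_j+e_k\equiv 0\pmod{11}$. Since the ten exponents $e_i$ are precisely the nonzero residues $1,\dots,10$ modulo $11$, a direct count shows there are exactly ten such triples, namely the ten monomials appearing in $\sigma_1$ and $\sigma_2$. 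The permutation $R$ (a product of two disjoint $5$-cycles $x_0\to x_1\to\cdots\to x_4\to x_0$ and $x_5\to x_6\to\cdots\to x_9\to x_5$) permutes these ten monomials in two orbits of length $5$, and the ordering chosen in \autoref{tbl:PR} is arranged so that no Koszul signs intervene; the two orbit sums are exactly $\sigma_1$ and $\sigma_2$. Hence $\bigl(\bw3V_{10}^\vee\bigr)^{\bB}=\langle\sigma_1,\sigma_2\rangle$ is two-dimensional.

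It then remains to cut this plane down to a line using $a$. The representation $V_5^\vee$ is given concretely in \cite{atlas}, so $\rho(a)$ is known; after performing the change of basis that diagonalizes $\rho(P)=\rho(ab)$ (bringing $P$, $R$ into the shapes of \autoref{eq:PR}), I would form $\bw2\rho(a)$ on $V_{10}^\vee$ and let it act on trivectors. Imposing that a general $\lambda_1\sigma_1+\lambda_2\sigma_2$ be fixed by $a$ yields a homogeneous linear system in $(\lambda_1,\lambda_2)$; by the character count of the first step its solution space is one-dimensional, and solving it gives $\lambda_1=\lambda_2$, so that $\bsigma=\sigma_1+\sigma_2$ spans $\bigl(\bw3V_{10}^\vee\bigr)^{\bG}$ up to scalar.

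The hard part is bookkeeping rather than conceptual. The eigenbasis of $\rho(P)$ is only determined up to an independent rescaling of each of the five eigenvectors, and this ambiguity changes both the matrix of $\rho(a)$ and the normalizations of the $y_{ij}=y_i\wedge y_j$, hence the precise coefficients of $\sigma_1$ and $\sigma_2$; one must therefore fix a single consistent choice of eigenvectors so that $\rho(a)$ and the expressions for $\sigma_1,\sigma_2$ live in the same basis. With that choice pinned down, the formation of $\bw2\rho(a)$ and the solution of the resulting (over-determined but rank-one) linear system are a routine finite computation, conveniently carried out in \Macaulay\ or {\tt Sage}. As an internal check, the fact that a single element $a\in\bG\setminus\bB$ already forces uniqueness is exactly consistent with the independent character computation giving invariant dimension $1$.
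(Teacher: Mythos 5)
Your proposal is correct and takes essentially the same route as the paper: a character computation showing that the invariant subspace of $\bw3V_{10}^\vee$ is one-dimensional, an explicit determination of the two-dimensional space of $\bB$-invariants spanned by $\sigma_1$ and $\sigma_2$, and then the use of the ATLAS matrix of the extra generator $a$, rewritten in the eigenbasis of $\rho(P)$, to single out $\bsigma=\sigma_1+\sigma_2$. Your additional details (the count of the ten $P$-invariant monomials with exponent sums divisible by $11$, and the warning that the eigenbasis must be normalized consistently so that $\rho(a)$ and the expressions for $\sigma_1,\sigma_2$ live in the same coordinates) are sound refinements of the same argument rather than a different method.
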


\section{The Debarre--Voisin fourfold}
The Debarre--Voisin variety associated with a non-zero trivector $\sigma$ is
the scheme
\[
X^\sigma_6\coloneqq\setmid{[V_6]\in\Gr(6,V_{10})}{\sigma|_{V_6}=0}
\]
in the Grassmannian $\Gr(6,V_{10})$ parametrizing those $[V_6]$
on which $\sigma$ vanishes. Its expected dimension is~4.
For $\sigma$ general, it is shown in \cite{dv} that $X_6^\sigma$ is a smooth
hyperkähler fourfold of $\KKK^{[2]}$-type. The Plücker polarization
on $\Gr(6,V_{10})$ induces a polarization $H$ on $X_6^\sigma$, which is
primitive and of Beauville--Bogomolov--Fujiki square $22$ and divisibility $2$.

The variety
\[
X^\sigma_3\coloneqq\setmid{[V_3]\in\Gr(3,V_{10})}{\sigma|_{V_3}=0}
\]
is the Plücker hyperplane section associated with $\sigma$. It has dimension~20.
We have the following criterion for the smoothness of $X^\sigma_3$ and
$X^\sigma_6$
\cite[Lemma~2.1]{bs}.
\begin{lemma}
\label{lemma:smoothness-X3-X6}
The Debarre--Voisin $X^\sigma_6$ is not smooth of dimension $4$ at a point
$[V_6]$ if and only if there exists $V_3\subset V_6$ satisfying the
degeneracy condition
\[
\sigma(V_3,V_3,V_{10})=0.
\]
In particular, $X^\sigma_6$ is smooth of dimension~$4$ if and only if
$X^\sigma_3$ is smooth.
\end{lemma}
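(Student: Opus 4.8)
The plan is to realize $X_6^\sigma$ as the zero locus of a section of a vector bundle, compute the differential of that section, and identify the degeneracy condition as the obstruction to its surjectivity. Let $\cU\subset V_{10}\otimes\cO$ be the tautological subbundle on $\Gr(6,V_{10})$. The trivector $\sigma$ determines a section $s$ of $\bw3\cU^\vee$ with $s([V_6])=\sigma|_{V_6}$, and $X_6^\sigma=Z(s)$. Since $\rk\bw3\cU^\vee=20=\codim X_6^\sigma$ is the expected codimension, a point $[V_6]\in X_6^\sigma$ is a smooth point of dimension $4$ if and only if the differential $ds_{[V_6]}\colon\Hom(V_6,V_{10}/V_6)\to\bw3 V_6^\vee$ is surjective. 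First I would record the explicit formula: choosing a lift $\tilde\phi\colon V_6\to V_{10}$ of $\phi$,
\[
ds_{[V_6]}(\phi)(v_1,v_2,v_3)=\sigma(\tilde\phi v_1,v_2,v_3)+\sigma(v_1,\tilde\phi v_2,v_3)+\sigma(v_1,v_2,\tilde\phi v_3),
\]
which is well defined because $\sigma|_{V_6}=0$ on $X_6^\sigma$ kills the ambiguity in the lift.

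Next I would pass to the transpose. Writing $Q=V_{10}/V_6$ and $\psi\colon Q\to\bw2 V_6^\vee$, $q\mapsto\sigma(\,\cdot\,,\cdot\,,q)|_{V_6}$, with image $L\coloneqq\psi(Q)\subseteq\bw2 V_6^\vee$ (so $\dim L\le 4$), the adjunction identity $\langle f\wedge\omega,\mu\rangle=\langle\omega,\iota_f\mu\rangle$ shows that $\mu\in\bw3 V_6$ lies in the kernel of $(ds_{[V_6]})^\vee$ if and only if every contraction $\iota_f\mu$ (for $f\in V_6^\vee$) is annihilated by $L$. Thus $ds_{[V_6]}$ fails to be surjective precisely when there is a nonzero $\mu$ with $C(\mu)\coloneqq\{\iota_f\mu: f\in V_6^\vee\}$ contained in $M\coloneqq\{m\in\bw2 V_6: \iota_m\sigma=0\text{ in }Q^\vee\}$, the common kernel of $L$, which has codimension $\le 4$ in $\bw2 V_6$. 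When $\mu=v_1\wedge v_2\wedge v_3$ is decomposable, $C(\mu)=\bw2 V_3$ for $V_3=\langle v_1,v_2,v_3\rangle$, so the condition reads $\bw2 V_3\subseteq M$, i.e. $\sigma(V_3,V_3,V_{10})=0$; this gives at once the implication from a degenerate $V_3$ to non-smoothness.

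The hard part is the reverse implication: producing a degenerate $V_3$ from an arbitrary nonzero $\mu$ in the cokernel, which need not be decomposable — indeed a general linear subspace of $\bw3 V_6$ need not meet the Grassmannian cone, so the structure of $M$ must be used. Here I would stratify by the dimension $d$ of the support $V_\mu$ of $\mu$ (the smallest subspace with $\mu\in\bw3 V_\mu$), for which $d\in\{3,5,6\}$, the case $d=3$ being the decomposable one already treated. For $d=6$, taking the generic representative $\mu=e_{123}+e_{456}$ (with $e_{ijk}=e_i\wedge e_j\wedge e_k$) one has $C(\mu)=\bw2\langle e_1,e_2,e_3\rangle\oplus\bw2\langle e_4,e_5,e_6\rangle$, so $C(\mu)\subseteq M$ forces $\bw2\langle e_1,e_2,e_3\rangle\subseteq M$ and $V_3=\langle e_1,e_2,e_3\rangle$ is degenerate. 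For $d=5$, with $\mu=e_5\wedge(e_{12}+e_{34})$, one computes that every $\omega\in L$ must have $e_5$ in its radical, reducing the problem to finding a common isotropic $2$-plane for a web of at most four alternating forms on $\langle e_1,\dots,e_4\rangle$; since the isotropic $2$-planes of each form cut out a hyperplane section of the quadric $\Gr(2,4)\subset\bP^5$, and a linear subspace of $\bP^5$ of dimension $\ge 1$ always meets this quadric, such a plane exists and $V_3=\langle e_5\rangle\oplus V_2$ is degenerate. Carrying out this orbit analysis in full — in particular checking the remaining non-generic support-$6$ representatives in the same spirit — is the technical heart of the argument and the step I expect to require the most care.

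Finally, the displayed ``in particular'' follows formally. The analogous tangent-space computation on $\Gr(3,V_{10})$, where $\bw3\cU_3^\vee$ is the Plücker line bundle and $X_3^\sigma$ is a hyperplane section, shows that $[V_3]\in X_3^\sigma$ is a singular point exactly when $ds_{[V_3]}=0$, i.e. when $\sigma(V_3,V_3,V_{10})=0$; hence the degenerate $V_3$'s are precisely the points of $\Sing(X_3^\sigma)$. Since every $V_3$ is contained in some $V_6$, the absence of a degenerate $V_3$ inside any $V_6$ is equivalent to $\Sing(X_3^\sigma)=\emptyset$, that is, to the smoothness of $X_3^\sigma$.
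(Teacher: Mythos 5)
Your framework is the right one (and is essentially the standard approach; note the paper itself does not prove this lemma but quotes it from \cite[Lemma~2.1]{bs}): realize $X_6^\sigma$ as the zero locus of a section of $\bw3\cU^\vee$, dualize the differential, and translate non-surjectivity into the existence of a nonzero $\mu\in\bw3V_6$ with $C(\mu)\subseteq M$. Your treatment of the decomposable case, the support-$5$ case (the $\Gr(2,4)$-quadric argument is correct), and the generic support-$6$ case are all fine. But the case analysis is incomplete, and you say so yourself: over $\bC$ the nonzero $\GL(V_6)$-orbits in $\bw3V_6$ are exactly four --- support $3$, support $5$, and \emph{two} orbits of support $6$ (the generic one $e_{123}+e_{456}$ and the ``tangent type''), and you only treat the generic one. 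As it happens, the deferred case is the easiest one, not the technical heart: for the tangent-type normal form $\mu=e_1\wedge e_2\wedge e_6+e_1\wedge e_5\wedge e_3+e_4\wedge e_2\wedge e_3$, contracting with $e_6^*$, $e_5^*$, $e_4^*$ gives $e_1\wedge e_2$, $-e_1\wedge e_3$, $e_2\wedge e_3$, so $C(\mu)\supseteq\bw2\langle e_1,e_2,e_3\rangle$ and your decomposable argument applies verbatim with $V_3=\langle e_1,e_2,e_3\rangle$. So this hole is fillable in one line, but as written the forward implication is unproved for one orbit.

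The more serious gap is one you do not acknowledge: the ``in particular'' statement does \emph{not} follow formally. The main equivalence quantifies over points $[V_6]$ \emph{of} $X_6^\sigma$, i.e.\ over $V_6$ with $\sigma|_{V_6}=0$. To deduce ``$X_3^\sigma$ singular $\Rightarrow$ $X_6^\sigma$ not smooth of dimension $4$'' you must produce, from a degenerate $V_3$, a six-space $V_6\supset V_3$ lying \emph{on} $X_6^\sigma$; your sentence ``every $V_3$ is contained in some $V_6$'' only produces a point of $\Gr(6,V_{10})$, about which the main equivalence says nothing. A priori a degenerate $V_3$ could be contained only in $V_6$'s with $\sigma|_{V_6}\ne 0$, and then $X_6^\sigma$ could still be smooth of dimension $4$. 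Ruling this out requires a genuine positivity argument, e.g.: on $\Gr(3,V_{10}/V_3)$ (of dimension $12$) the condition $\sigma(V_3,V_6,V_6)=0$ is the vanishing of a section of the globally generated rank-$9$ bundle $V_3^\vee\otimes\bw2\cU^\vee$, whose top Chern class $c_3(\bw2\cU^\vee)^3$ is the cube of the Schubert class $s_{2,1}$ and is nonzero (indeed $\int s_{2,1}^4=8$ on $\Gr(3,7)$), so that locus $Z$ is nonempty of dimension $\ge 3$; the residual condition $\sigma|_{V_6}=0$ is then the vanishing of a section of the ample Plücker line bundle restricted to $Z$, which must vanish somewhere on the positive-dimensional projective $Z$. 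Some argument of this kind (Chern class plus global generation/ampleness) is an unavoidable ingredient, and its absence means the backward implication of the ``in particular'' is not established by your proposal.
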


In the case of $\bsigma$, the smoothness of $X_3^\bsigma$ can be verified
directly using computer algebra (thanks are due to Frédéric Han for his help with this
computation).
\begin{proposition}
\label{prop:smoothness}
For the special trivector $\bsigma$ defined in~\eqref{eq:sigma0},
the hyperplane section $X_3^\bsigma$ and, hence, the special Debarre--Voisin
$X_6^\bsigma$ are both smooth.
\end{proposition}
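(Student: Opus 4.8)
The plan is to reduce the whole statement to the smoothness of the hyperplane section $X_3^\bsigma=\Gr(3,V_{10})\cap H_\bsigma$, where $H_\bsigma\subset\bP(\bw3V_{10})$ is the Plücker hyperplane defined by the linear form $\bsigma$, and then to verify that smoothness by a direct Jacobian computation. By the ``in particular'' clause of \autoref{lemma:smoothness-X3-X6}, once $X_3^\bsigma$ is known to be smooth the fourfold $X_6^\bsigma$ is automatically smooth of dimension~$4$; so the entire content of the proposition is the smoothness of the $20$-dimensional variety $X_3^\bsigma$. Concretely, using the description of the affine tangent space to the cone over $\Gr(3,V_{10})$ at a decomposable vector $v_1\wedge v_2\wedge v_3$ as the span of the $w_1\wedge v_2\wedge v_3+v_1\wedge w_2\wedge v_3+v_1\wedge v_2\wedge w_3$ for $w_i\in V_{10}$, one checks that a point $[V_3]\in X_3^\bsigma$ is singular exactly when $\bsigma(V_3,V_3,V_{10})=0$, recovering the degeneracy condition of \autoref{lemma:smoothness-X3-X6}; the goal is to show that no such $V_3$ exists.

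The concrete verification proceeds chart by chart. I would cover $\Gr(3,V_{10})$ by the $\binom{10}{3}=120$ standard affine opens $U_I\cong\bA^{21}$ indexed by the $3$-element pivot sets $I\subset\{0,\dots,9\}$: on $U_I$ a subspace $V_3$ is represented by the rows of a $3\times 10$ matrix in reduced row-echelon form with pivots in the columns of $I$, the $21$ non-pivot entries serving as coordinates. Substituting the three rows into $\bsigma$ yields a single cubic polynomial $f_I$ in these $21$ variables whose vanishing defines $X_3^\bsigma\cap U_I$, and the singular points of $X_3^\bsigma$ on this chart are the common zeros of $f_I$ and its $21$ partial derivatives. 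Smoothness on $U_I$ is then the assertion that the Jacobian ideal $(f_I,\partial f_I/\partial u_1,\dots,\partial f_I/\partial u_{21})$ contains $1$, which a Gröbner-basis computation certifies; running this over all $I$ establishes the claim.

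To keep the computation tractable I would exploit two reductions. First, since $\bsigma$ is $\bG$-invariant the singular locus of $X_3^\bsigma$ is $\bG$-stable, and the order-$5$ element $R$ of \autoref{tbl:PR}, which acts on the ten coordinates by two $5$-cycles, permutes the charts and cuts the $120$ opens down to $24$ orbit representatives; the diagonal action of $P$ can be used to further simplify each individual chart. Second, since $\bsigma$ has integer (indeed $\pm1$) coefficients in the basis of \autoref{eq:sigma0}, the scheme $X_3^\bsigma$ is defined over $\bZ$ and its non-smooth locus is proper over $\Spec\bZ$, hence has closed image; exhibiting a single prime $p$ of good reduction for which the fibre $X_3^\bsigma\otimes\bF_p$ is smooth of dimension~$20$ then forces the generic — and therefore the complex — fibre to be smooth as well. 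Working over such an $\bF_p$ removes all coefficient growth and lets \Macaulay\ finish each chart quickly.

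The main obstacle is purely the size of the computation rather than any conceptual difficulty: one is checking that a $20$-dimensional subvariety of a $21$-dimensional ambient space is smooth, that is, that a system consisting of one cubic and its $21$ quadric partials has no common solution, repeated across the chart representatives. The saving graces are that each test of whether $1$ lies in the Jacobian ideal is a decidable finite computation, that reduction modulo a well-chosen prime together with the $R$-symmetry shrinks the problem to a manageable number of small Gröbner-basis calculations, and that one needs only the qualitative output ``the singular scheme is empty'' rather than any finer invariant. Once every representative chart returns the unit ideal, $X_3^\bsigma$ is smooth, and \autoref{lemma:smoothness-X3-X6} upgrades this to the smoothness of $X_6^\bsigma$.
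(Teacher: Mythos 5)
Your proposal is correct and takes essentially the same route as the paper: it too invokes \autoref{lemma:smoothness-X3-X6} to reduce everything to the smoothness of $X_3^\bsigma$, and then checks this chart by chart, each affine chart of $\Gr(3,V_{10})$ carrying a single cubic in $\bA^{21}$ tested via the Jacobian criterion in \Macaulay. Your two extra reductions (cutting the $120$ charts to $24$ orbit representatives via the $R$-action, and spreading out over $\Spec\bZ$ to verify smoothness modulo a single prime) are sound but turn out to be unnecessary, since the paper's computation simply runs the Jacobian criterion over $\bQ$ on all $120$ charts directly.
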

\begin{proof}
A direct check of the smoothness of $X_3^\bsigma$ using its ideal is not
feasible, since there are too many variables and equations. Instead, we
can check the smoothness on each chart of $\Gr(3,V_{10})$ where it is defined
by one single cubic polynomial in an affine space $\bA^{21}$, using the
Jacobian criterion. See Section~\ref{m2:smoothness} for the \Macaulay{} code.
\end{proof}

The action of $\GG$ on $V_{10}$ induces an action on $X_6^\bsigma$
that preserves the polarization~$H$ and, hence, a homomorphism of groups
$\GG\to\Aut_H(X_6^\bsigma)$. The induced action of the automorphism group on
the symplectic form gives a character $\chi\colon\Aut_H(X_6^\bsigma)\to \bC^*$,
and we obtain the following short exact sequence
\[
1\to \Aut^s_H\to \Aut_H\stackrel{\chi}\to
\overline{\Aut_H}\to 0,
\]
where $\Aut^s_H=\Aut^s_H(X_6^\bsigma)=\ker\chi$ is the subgroup of symplectic
automorphisms, and $\overline{\Aut_H}=\overline{\Aut_H}(X_6^\bsigma)$ is the
image of $\chi$, which is a finite subgroup of $\bC^*$ and is abelian. Since the
group $\GG$ is simple and non-abelian, we may deduce that the image of the
homomorphism $\GG\to \Aut_H(X_6^\bsigma)$ must be contained in the subgroup
$\Aut^s_H(X_6^\bsigma)$
of symplectic automorphisms. We shall see that this is an isomorphism onto
$\Aut^s_H(X_6^\bsigma)$.

\section{The Peskine variety}
With a trivector $\sigma$, we can associate yet another variety: the Peskine
variety
\[
X^\sigma_1\coloneqq\setmid{[V_1]\in\bP(V_{10})}{\rk\sigma|_{V_1}\le 6}.
\]
More precisely, for each $[V_1]\in\bP(V_{10})$, the skew-symmetric $2$-form
$\sigma(V_1,-,-)$ generically has rank~$8$, and the Peskine variety
$X^\sigma_1$ is the locus where this rank drops
to $6$ or less.  Equivalently, given a basis $(e_i)$ of $V_{10}$, we can
identify $\sigma$ with a $10\times 10$ skew-symmetric matrix with entries
$f_{ij}\coloneqq\sigma(e_i,e_j,-)$. Then $X_1^\sigma$ is defined in
$\bP(V_{10})$ by all the $8\times 8$-Pfaffians of this matrix.
It has expected dimension $6$ and degree $15$. Its smoothness is characterized
by the following lemma \cite[Lemma~2.8]{bs}.
\begin{lemma}
If the Peskine variety $X^\sigma_1$ is not smooth of dimension $6$ at a point
$[V_1]$, then either $\rk\sigma|_{V_1}\le 4$, or there exists a $V_3\supset V_1$
such that $\sigma(V_3,V_3,V_{10})=0$.
\end{lemma}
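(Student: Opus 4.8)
The plan is to carry out the standard tangent-space computation for the Pfaffian degeneracy locus $X_1^\sigma$ and to read off the two alternatives from it. Fix a point $[V_1]=[v]\in X_1^\sigma$ and let $A\coloneqq\sigma(v,-,-)\in\bw2 V_{10}^\vee$ be the associated skew form, with kernel $K\coloneqq\ker A$. Since $\sigma(v,v,-)=0$, we always have $v\in K$. If $\rk A\le 4$ we land in the first alternative, so from now on I assume $\rk A=6$, hence $\dim K=4$, and aim to produce the subspace $V_3$.

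The next step is to compute the affine tangent space to $X_1^\sigma$ at $v$. For a skew form $A$ of corank $K$, the tangent space at $A$ to the locus $\{\rk\le 6\}\subset\bw2 V_{10}^\vee$ is the space of skew forms vanishing on $K\times K$; this is the one first-order computation that needs care, and I would do it by perturbing $A$ and taking the Schur complement with respect to a symplectic complement of $K$. Because the assignment $v\mapsto\sigma(v,-,-)$ is linear, pulling this condition back identifies the affine tangent space of $X_1^\sigma$ at $v$ with $\ker\Psi$, where $\Psi\colon V_{10}\to\bw2 K^\vee$ sends $w$ to the restriction $\sigma(w,-,-)|_{\bw2 K}$.

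The key bookkeeping point is that $\Psi$ has a built-in cokernel. Indeed, for $k\in K$ we have $\sigma(v,k,-)=0$, so $v\wedge k$ pairs to zero against every $\sigma(w,-,-)$; thus the $3$-dimensional subspace $v\wedge K\subseteq\bw2 K$ lies in the annihilator of $\operatorname{im}\Psi$, forcing $\rk\Psi\le 3$ (note $\dim\bw2 K^\vee=6$). Since $X_1^\sigma$ has expected dimension $6$, it is smooth of dimension $6$ at $[V_1]$ exactly when $\dim\ker\Psi=7$, i.e.\ when $\rk\Psi=3$, i.e.\ when the annihilator of $\operatorname{im}\Psi$ in $\bw2 K$ equals $v\wedge K$. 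Consequently, non-smoothness at $[V_1]$ (with $\rk A=6$) is equivalent to the existence of some $\beta\in\bw2 K$, lying in the kernel of the contraction $\bw2 V_{10}\to V_{10}^\vee$ induced by $\sigma$, with $\beta\notin v\wedge K$.

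Finally I would upgrade such a $\beta$ to the desired $V_3$. Passing to $\overline K\coloneqq K/\langle v\rangle$, which has dimension $3$, the projection $\bw2 K\to\bw2\overline K$ has kernel exactly $v\wedge K$, so $\beta$ has nonzero image $\overline\beta\in\bw2\overline K$. In a $3$-dimensional space every $2$-vector is decomposable, so $\overline\beta=\overline a\wedge\overline b$; lifting to $a,b\in K$ and adjusting by an element of $v\wedge K$ (which also lies in the kernel of the contraction), one obtains a genuinely decomposable $a\wedge b$ annihilated by the contraction, with $v,a,b$ linearly independent. Setting $V_3\coloneqq\langle v,a,b\rangle$, the three generators $v\wedge a$, $v\wedge b$, $a\wedge b$ of $\bw2 V_3$ are all killed by the contraction (the first two because $a,b\in K$), which is precisely the condition $\sigma(V_3,V_3,V_{10})=0$. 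The main obstacle is the first-order analysis of the Pfaffian locus together with the excess-dimension bookkeeping in the third paragraph; once the automatic $3$-dimensional cokernel $v\wedge K$ is correctly identified, the decomposability trick over the $3$-dimensional quotient $\overline K$ finishes the argument cleanly.
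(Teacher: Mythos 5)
The paper itself contains no proof of this lemma: it is quoted from \cite[Lemma~2.8]{bs}, so there is nothing internal to compare your argument against, and I can only assess it on its own terms. On those terms it is correct, and it is the natural tangent-space argument. The three key points all check out: (i) at $[v]$ with $A=\sigma(v,-,-)$ of rank exactly $6$, the tangent space to the locus $\{\rk\le 6\}$ of skew forms at $A$ is $\{B : B|_{K\times K}=0\}$ (your Schur-complement computation), and since the Pfaffian ideal defines this locus reducedly near rank-$6$ points, pulling back along the linear map $w\mapsto\sigma(w,-,-)$ identifies the affine tangent space of $X_1^\sigma$ at $v$ with $\ker\Psi$; (ii) $v\wedge K$ is $3$-dimensional and annihilates $\operatorname{im}\Psi$ because $\sigma(w,v,k)=\sigma(v,k,w)=0$ for $k\in K$, so $\rk\Psi\le 3$ always; (iii) if $\rk\Psi\le 2$, any $\beta$ in the annihilator of $\operatorname{im}\Psi$ outside $v\wedge K$ has nonzero, automatically decomposable image in $\bw2\bigl(K/\langle v\rangle\bigr)$, and correcting by an element of $v\wedge K$ (which $\sigma$ also contracts to zero) produces $a\wedge b$ with $a,b\in K$, $\sigma(a,b,-)=0$, so that $V_3=\langle v,a,b\rangle$ satisfies $\sigma(V_3,V_3,V_{10})=0$. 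The one step you invoke without justification is that every component of $X_1^\sigma$ through $[v]$ has dimension at least $6$, which you need in order to convert ``$\rk\Psi=3$'' into ``smooth of dimension $6$'' and thus run the contrapositive. This is true but deserves a sentence, because the naive count is off: the rank-$\le 6$ locus of $10\times 10$ skew forms has codimension $6$, which only bounds the codimension of its preimage by $6$. Instead one should note that $\sigma(x,-,-)$ always has $x$ in its kernel, hence descends to a skew form on the rank-$9$ quotient bundle over $\bP(V_{10})$, and the rank-$\le 6$ degeneracy locus of a skew form on a rank-$9$ bundle has every nonempty component of codimension at most $\binom{3}{2}=3$, giving the required bound $\dim\ge 6$.
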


In the case of the special trivector $\bsigma$, since $X_3^\bsigma$ was shown
to be smooth, the second case does not happen by
Lemma~\ref{lemma:smoothness-X3-X6}.
Therefore, the singular locus of $X_1^\bsigma$ is precisely the locus where the
rank of $\bsigma$ drops to even less.
Equivalently, it is defined by all the $6\times 6$-Pfaffians of $\bsigma$
seen as a skew-symmetric matrix. This allows us to explicitly compute the ideal
of the singular subscheme using \Macaulay{} (see Section~\ref{m2:ideals}).
In particular, we may verify that the rank-$4$ locus $\Sing(X_1^\sigma)$ is a
subscheme of dimension~0 and length~55, while the rank-2 locus is empty. Also,
the rank-6 locus $X_1^\bsigma$ is, indeed, of expected dimension~6 and degree~15.
We now show that $\Sing(X_1^\sigma)$ is reduced.

\begin{proposition}
\label{prop:55-points}
For the special trivector $\bsigma$, the singular locus of the Peskine variety
$X_1^\bsigma$ consists of the $55$ distinct points
\[
(p_{i,j})_{0\le i\le 4,0\le j\le 10},
\]
where the rank of $\bsigma$ is equal to $4$ instead of $6$.  The group $\GG$
acts transitively on these $55$ points.
\end{proposition}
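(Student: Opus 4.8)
The guiding observation is that, since $\bsigma$ is $\bG$-invariant, every rank stratum of the skew form $\bsigma$ is $\bG$-stable; in particular the rank-$4$ locus $\Sing(X_1^\bsigma)$ is a $\bG$-stable $0$-dimensional scheme, which \Macaulay\ has already shown to have length $55$. My plan is to realize these $55$ points as a single \emph{free} orbit under the Borel subgroup $\bB=\langle P,R\rangle$, whose order is precisely $11\cdot 5=55$. Producing such an orbit settles both assertions at once: reducedness (hence $55$ distinct points) and transitivity.

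Concretely, I would first exhibit one singular point $p_{0,0}=[v]$, i.e. a line with $\rk\bsigma(v,-,-)=4$, either by solving the $6\times 6$-Pfaffian equations or by extracting a point from the computed ideal of $\Sing(X_1^\bsigma)$; one expects it to be defined over $\bQ(\zeta)$, $\zeta=e^{2\pi i/11}$. Then I would compute its orbit directly from \autoref{eq:PR}: $P$ acts diagonally with the ten distinct eigenvalues of \autoref{tbl:PR}, and $R$ acts as a permutation of the coordinates, so setting $p_{i,j}\coloneqq R^{i}P^{j}\cdot p_{0,0}$ produces $55$ candidate points indexed by $(i,j)\in\bZ/5\times\bZ/11$. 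The content of the computation is to check that these $55$ points are pairwise distinct.

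Once distinctness is established the proposition follows immediately. The $55$ distinct points all lie in $\Sing(X_1^\bsigma)$, a scheme of length $55$, so they exhaust it; hence $\Sing(X_1^\bsigma)$ is reduced and equals $\{p_{i,j}\}$. They form one orbit under $\bB$, so $\bB$---and therefore $\bG\supseteq\bB$---acts transitively, and $\bB$ acts regularly. In particular the $\bG$-stabilizer of a point has order $660/55=12$ and meets $\bB$ trivially, in agreement with $\bG=\bB\cdot\mathrm{Stab}$ and $\gcd(55,12)=1$.

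The crux is the distinctness of the $55$ orbit points, equivalently the freeness of the $\bB$-action, and here the two prime factors behave very differently. The factor $11$ is painless: the fixed points of $P$ on $\bP(V_{10})$ are exactly the ten coordinate points $[e_k]$ (the eigenvalues of $P$ being distinct), and one reads off \autoref{eq:sigma0} that each basis vector lies in exactly three monomials of $\bsigma$ with disjoint complementary pairs, so every $\bsigma(e_k,-,-)$ has rank $6$ and no coordinate point is singular; consequently $P$ fixes no point of $\Sing(X_1^\bsigma)$ and the $11$ points $P^{j}p_{0,0}$ are automatically distinct. The delicate factor is $5$: an order-$5$ element fixes a union of five lines in $\bP(V_{10})$, so ruling out that some such element stabilizes $p_{0,0}$ (which would collapse the orbit to size $11$) is exactly what the explicit orbit computation certifies, by simply displaying the $55$ distinct points. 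A robust alternative is to verify reducedness directly in \Macaulay\ by checking that the ideal of $\Sing(X_1^\bsigma)$ is radical, and then to confirm transitivity by applying $P$, $R$, and if necessary the extra generator $a$ to the resulting point list.
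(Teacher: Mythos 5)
Your proposal is correct and follows essentially the same route as the paper: both rely on the \Macaulay\ computation that the rank-$4$ locus is a length-$55$ scheme, exhibit one explicit singular point, generate its $\bB$-orbit via the explicit matrices of $P$ and $R$, and deduce reducedness and transitivity from the distinctness of the $55$ orbit points (the paper merely adds a specific recipe for finding the first point, via elimination on the $R$-invariant hyperplane $x_0+\cdots+x_4=0$). Your extra observation---that the fixed points of $P$ in $\bP(V_{10})$ are the ten coordinate points, each of rank exactly $6$, so every $\inner{P}$-orbit in $\Sing(X_1^\bsigma)$ automatically has size $11$---is a nice theoretical refinement not present in the paper, though the decisive distinctness check remains computational in both treatments.
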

\begin{proof}
Since we have already obtained the ideal of the rank-$4$ locus, to verify that
there are 55 distinct points, we can compute the radical in \Macaulay{} to
check that it is, indeed, reduced.

Alternatively, we can use a Gröbner bases computation to obtain the explicit
coordinates for the underlying points and verify that there are $55$ distinct
solutions (the author wrote a \Macaulay{} package, {\tt RationalPoints2}, that
can perform this computation to produce the explicit coordinates). However, to better understand the action of
the group~$\GG$ on these points, we will explain another step-by-step procedure
to solve the system using this group action.

We first consider the hyperplane $x_0+x_1+x_2+x_3+x_4=0$. The intersection of
this hyperplane with the singular locus is a subscheme of length $5$.
To compute the coordinates of these $5$ points, we can use elimination and
obtain a degree-5 equation for $X\coloneqq x_1/x_0$
\[
1-4X+2X^2+5X^3-2X^4-X^5=0.
\]
This polynomial splits in the cyclotomic field $\bQ(\zeta)$, and all the roots
are real, so its splitting field is the real subextension of $\bQ(\zeta)/\bQ$
of degree $5$. We take one real root $\zeta^7+\zeta^6+\zeta^5+\zeta^4$, which
allows us to recover the coordinates of one point $p_{0,0}$. The action of the
order-$5$ element $R$ now recovers all the five points on the hyperplane. We
denote these by $p_{0,0},\dots,p_{4,0}$. They are all real points.

We then consider the action of the order-$11$ element $P$, which acts as
in Table~\ref{tbl:PR}. This allows us to recover the other $50$ points, which have
coordinates in $\bQ(\zeta)$ and are complex points. We write $p_{i,j}$ for
$P^j(p_{i,0})$. One may then verify that all $55$ points are distinct, and, thus,
the subgroup $\bB$ generated by $P$ and $R$ acts transitively on them.

See Section~\ref{m2:55-points} for the \Macaulay{} code.
\end{proof}

\section{Automorphism group and Picard group}
We consider again the general case. In \cite[Section~4]{bs}, it is shown that
for a trivector $\sigma$ such that $X_6^\sigma$ is smooth, each isolated
singular point $p=[V_1]$ of $X_1^\sigma$, where $\sigma(V_1,-,-)$ has rank $4$
leads to a divisor
\[
D\coloneqq\setmid{[U_6]\in X^\sigma_6}{U_6\supset V_1},
\]
in $X^\sigma_6$. We have the following result \cite[Lemma~4.9 and
Corollary~4.10]{bs}.
\begin{proposition}\label{prop:D24}
Let $\sigma$ be a trivector such that $X_6^\sigma$ is a smooth hyperkähler
fourfold. Let $p=[V_1]$ be an isolated singular point of $X_1^\sigma$ and let $D$
be the induced divisor. Write $H$ for the Plücker polarization on
$X^\sigma_6$. Then the intersection matrix between $H$ and $D$ with respect to
the Beauville--Bogomolov--Fujiki form $\q$ on $H^2(X_6^\sigma,\bZ)$
is
\[\begin{pmatrix} 22 & 2 \\ 2 & -2\end{pmatrix}.\]
The class $D$ has divisibility~$1$.
\end{proposition}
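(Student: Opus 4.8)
The plan is to translate the two unknown Beauville--Bogomolov--Fujiki (BBF) pairings into intersection numbers on $X_6^\sigma\subset\Gr(6,V_{10})$ by means of the Fujiki relation for the $\KKK^{[2]}$-type: for any $\alpha_1,\dots,\alpha_4\in H^2(X_6^\sigma,\bZ)$ one has
\[
\int_{X_6^\sigma}\alpha_1\alpha_2\alpha_3\alpha_4
=\q(\alpha_1,\alpha_2)\q(\alpha_3,\alpha_4)+\q(\alpha_1,\alpha_3)\q(\alpha_2,\alpha_4)+\q(\alpha_1,\alpha_4)\q(\alpha_2,\alpha_3).
\]
Since $\q(H)=22$ is already known, the whole matrix is determined once I compute two suitable intersection numbers; concretely $\int_{X_6^\sigma}H^3D=3\,\q(H)\q(H,D)=66\,\q(H,D)$ and $\int_{X_6^\sigma}H^2D^2=\q(H)\q(D)+2\,\q(H,D)^2=22\,\q(D)+2\,\q(H,D)^2$, so the claim is equivalent to $\int H^3D=132$ and $\int H^2D^2=-36$.

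For $\q(H,D)$ I would compute $\int_{X_6^\sigma}H^3D$, which is the Plücker degree of the threefold $D$. Here $D=X_6^\sigma\cap\Sigma$, where $\Sigma\coloneqq\setmid{[U_6]}{V_1\subset U_6}\cong\Gr(5,V_{10}/V_1)$ has codimension~$4$; crucially this intersection is \emph{not} proper---$D$ has dimension~$3$ rather than the expected~$0$---and the excess is governed precisely by the rank drop to~$4$ of the two-form $\omega\coloneqq\sigma(V_1,-,-)$, which descends to a rank-$4$ form on $V_{10}/V_1$. Restricting $\sigma$ to a six-space $U_6\supset V_1$ and splitting $\bw3U_6^\vee$ shows that $D\cong\setmid{[\bar U_5]\in\Gr(5,V_{10}/V_1)}{\omega|_{\bar U_5}=0,\ \bar\sigma|_{\bar U_5}=0}$ is cut out by the isotropy condition for $\omega$ together with the vanishing of the induced trivector $\bar\sigma$; describing $D$ as the resulting degeneracy locus and evaluating its Plücker degree by a Chern/Schubert-class computation should give $\int H^3D=132$, hence $\q(H,D)=2$.

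For $\q(D)$ I would avoid squaring a single divisor and instead pick two of the (finitely many) rank-$4$ points $p=[V_1]$ and $p'=[V_1']$, setting $V_2\coloneqq V_1\oplus V_1'$. The divisors $D_p$ are permuted by the monodromy as $\sigma$ varies, hence are numerically equivalent, so $\int_{X_6^\sigma}H^2D^2=\int_{X_6^\sigma}H^2D_pD_{p'}$, and $D_p\cap D_{p'}=X_6^\sigma\cap\setmid{[U_6]}{V_2\subset U_6}$ is a surface $S\subset\Gr(4,V_{10}/V_2)$. Its Plücker degree $\int_S H^2$ can again be computed from the induced data on $V_{10}/V_2$ (the reduced trivector, the two two-forms, and the one one-form coming from $V_2$); matching $\int_S H^2=-36$ then forces $\q(D)=-2$. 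The \emph{main obstacle} is exactly that all of these intersections are excess intersections---the failure of transversality is precisely what makes $D$ a divisor in the first place---so one cannot use naive products of Schubert classes and must instead compute the degrees of honest degeneracy subvarieties, e.g.\ via a resolution or a residual-intersection argument; obtaining $\q(D)=-2$ is the delicate part.

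Finally, for the divisibility I emphasize that it is \emph{not} detected by the Gram matrix: the value $\q(D)=-2$ is consistent with either divisibility, since the Hilbert--Chow class also has square $-2$ but divisibility~$2$. Writing $H^2(X_6^\sigma,\bZ)=\Lambda_0\oplus\bZ\delta$ with $\Lambda_0$ the unimodular $\KKK$-lattice and $\q(\delta)=-2$, and decomposing $D=d+m\delta$, one checks that the divisibility of $D$, namely the positive generator of $\setmid{\q(D,\beta)}{\beta\in H^2(X_6^\sigma,\bZ)}$, equals~$1$ if and only if $d\notin 2\Lambda_0$, i.e.\ if and only if there exists an integral class $\beta$ with $\q(D,\beta)$ odd. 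The plan is to exhibit such a $\beta$ geometrically---for instance a divisor class arising from an auxiliary sub-Grassmannian section, or (in the case of the special $\bsigma$) by reading it off the explicit rank-$21$ Picard lattice of part~(4)---since the mere existence of an odd pairing is precisely the assertion that the divisibility equals~$1$, and it is the point that genuinely requires input beyond the $2\times 2$ computation.
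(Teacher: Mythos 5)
The first thing to note is that the paper itself does not prove this proposition: it is imported verbatim from \cite{bs} (Lemma~4.9 and Corollary~4.10 there), so the comparison is necessarily with that reference rather than with an argument in the text. Your reduction via the Fujiki relation is correct and is exactly the mechanism the paper uses one proposition later (relation~\autoref{eq:BBF}), and your plan for $\q(H,D)$---computing the Plücker degree of $D$ by describing it as a degeneracy locus inside $\Gr(5,V_{10}/V_1)$ and doing Schubert/Chern-class calculus, with the excess intersection acknowledged---is a viable route to $\int_{X_6^\sigma}H^3D=132$.

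The genuine gap is in your computation of $\q(D)$. The claim that the divisors $D_p$ attached to distinct rank-$4$ points are ``permuted by monodromy, hence numerically equivalent'' is false, and the paper itself refutes it: \autoref{prop:intersection-number} shows that $\q(D_p,D_{p'})\in\{0,1\}$ for $p\ne p'$, while $\q(D_p,D_p)=-2$, so the classes are pairwise \emph{distinct}; monodromy (or the $\bG$-action on $X_6^{\bsigma}$, which permutes the $55$ classes $D_{i,j}$) produces isometries permuting these classes, not equalities among them. The failure is also visible on its face by positivity: for $p\ne p'$ the intersection $D_p\cap D_{p'}$ is an honest surface, so $\int_{X_6^\sigma}H^2D_pD_{p'}>0$ (the paper computes this degree to be $30$ or $8$), whereas your target value is $-36$; a negative number can only come from a true self-intersection, whose excess term---the normal bundle of $D$---is precisely what the substitution $D^2\rightsquigarrow D_pD_{p'}$ discards. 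Any correct computation of $\q(D)$ must therefore work on $D$ itself, e.g.\ via $\int_{X}H^2D^2=\int_D H^2\,c_1(N_{D/X})$ using the explicit description of $D$ (in \cite{bs} this goes through the $\bP^1$-fibration of $D$ over a degree-$6$ K3 surface). Finally, on divisibility you are right that the Gram matrix alone cannot decide it, but your fix is incomplete: reading an odd pairing off the rank-$21$ Picard lattice only treats the special $\bsigma$, whereas the statement concerns a general such $\sigma$; one would still need either an intrinsic integral class pairing oddly with $D$ or a parallel-transport argument showing the divisibility is constant along the relevant Noether--Lefschetz locus, neither of which is supplied.
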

Divisors induced by distinct isolated singular points are also distinct. This
can be proved by computing their intersection numbers as follows.
\begin{proposition}
\label{prop:intersection-number}
Let $\sigma$ be a trivector such that $X_6^\sigma$ is a smooth hyperkähler
fourfold. Let $p=[V_1]$ and $p'=[V_1']$ be two different isolated singular
points on $X_1^\sigma$. Write $D$ and $D'$ for the
divisors on $X_6^\sigma$ that they define. If $\sigma(V_1,V_1',-)=0$, then
$\q(D,D')=1$; otherwise we have $\q(D,D')=0$. In particular, the classes $D$
and $D'$ are distinct.
\end{proposition}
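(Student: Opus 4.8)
The plan is to reduce $\q(D,D')$ to a single intersection number via the Fujiki relation, and then to evaluate that number from the explicit description of $D\cap D'$. Since $X^\sigma_6$ is of $\KKK^{[2]}$-type its Fujiki constant is $3$, so polarizing $\int_X\alpha^4=3\,\q(\alpha,\alpha)^2$ gives, for any four divisor classes,
\[
\int_X\alpha_1\alpha_2\alpha_3\alpha_4=\q(\alpha_1,\alpha_2)\q(\alpha_3,\alpha_4)+\q(\alpha_1,\alpha_3)\q(\alpha_2,\alpha_4)+\q(\alpha_1,\alpha_4)\q(\alpha_2,\alpha_3).
\]
Taking $\alpha_1=\alpha_2=H$, $\alpha_3=D$, $\alpha_4=D'$ and feeding in the values $\q(H,H)=22$ and $\q(H,D)=\q(H,D')=2$ from the previous proposition, I obtain
\[
\int_X H^2\,D\,D'=22\,\q(D,D')+8.
\]
Hence everything comes down to the single integer $\int_X H^2DD'$, and the two asserted values $\q(D,D')\in\{0,1\}$ correspond to $\int_X H^2DD'\in\{8,30\}$.

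Next I would describe the cycle $D\cap D'$. Writing $V_1=\langle v\rangle$ and $V_1'=\langle v'\rangle$, one has set-theoretically
\[
D\cap D'=\setmid{[U_6]\in X^\sigma_6}{U_6\supset V_1+V_1'},
\]
and $V_2\coloneqq V_1+V_1'$ is $2$-dimensional because $p\ne p'$. The case distinction now appears naturally: for such $U_6$ the equation $\sigma|_{U_6}=0$ forces $\sigma(v,v',u)=0$ for every $u\in U_6$, i.e.\ $U_6\subset\ker\sigma(V_1,V_1',-)$. If $\sigma(V_1,V_1',-)\ne0$ this kernel is a hyperplane $V_9$ (which contains $V_2$), and $D\cap D'$ is carved out inside the flag locus $\set{V_2\subset U_6\subset V_9}\cong\Gr(4,V_9/V_2)$; if $\sigma(V_1,V_1',-)=0$ there is no extra linear constraint and $D\cap D'$ lies in $\set{V_2\subset U_6}\cong\Gr(5,V_{10}/V_2)$. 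Using the smoothness of $X^\sigma_6$ (\autoref{lemma:smoothness-X3-X6}) I would check that $D$ and $D'$ meet properly, so that $S\coloneqq D\cap D'$ is a surface and $\int_X H^2DD'=\int_S H^2=\deg_H S$; note that $S$ cannot be empty, since otherwise $\int_XH^2DD'=0$ would force the non-integer value $\q(D,D')=-8/22$.

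The heart of the matter, and what I expect to be the main obstacle, is computing the Plücker degree $\deg_H S$ in each case. After decomposing $\sigma|_{U_6}=0$ along $V_2\subset U_6$, the residual equations are the isotropy conditions $\omega_v|_{\bar U}=\omega_{v'}|_{\bar U}=0$ together with the purely $3$-linear part, where $\omega_v=\sigma(v,-,-)$ and $\omega_{v'}$ both have rank $4$ because $p,p'$ are rank-$4$ points; it is exactly this degeneracy that cuts $S$ down to a surface of the expected degree rather than to a smaller-dimensional locus. Translating these conditions into the vanishing of sections of the appropriate bundles on $\Gr(4,V_9/V_2)$, respectively $\Gr(5,V_{10}/V_2)$, and pushing the resulting Chern classes against $H^2$ should yield $\deg_H S=8$ when $\sigma(V_1,V_1',-)\ne0$ and $\deg_H S=30$ when $\sigma(V_1,V_1',-)=0$, hence $\q(D,D')=0$ and $\q(D,D')=1$ respectively.

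Finally, distinctness is a formal consequence: were $D=D'$ as classes in $H^2(X^\sigma_6,\bZ)$, then $\q(D,D')=\q(D,D)=-2$, which contradicts both computed values $0$ and $1$. Beyond mere distinctness, this computation determines the full Gram matrix of $\langle H,D,D'\rangle$, which is what one needs downstream for the description of the Picard lattice.
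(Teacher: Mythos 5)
Your first step is identical to the paper's: polarizing the Fujiki relation gives $H^2\cdot D\cdot D'=22\,\q(D,D')+8$, so everything reduces to showing $\deg_H(D\cap D')=30$ when $\sigma(V_1,V_1',-)=0$ and $\deg_H(D\cap D')=8$ otherwise, and your final distinctness argument via $\q(D,D)=-2$ is also the paper's. The gap is exactly at the point you yourself call the heart of the matter: the degrees are never computed, and the method you sketch would fail as stated. Keep your notation $\omega_v=\sigma(v,-,-)$, $V_2=V_1+V_1'$, $\bar U=U_6/V_2$, and $V_9=\ker\sigma(V_1,V_1',-)$. In $\Gr(4,V_{10}/V_2)$ (note: $\Gr(4,\cdot)$, not $\Gr(5,\cdot)$, since $U_6/V_2$ has dimension $4$) the residual equations $\omega_v|_{\bar U}=0$, $\omega_{v'}|_{\bar U}=0$, $\sigma|_{\bar U}=0$ number $6+6+4=16$, so the expected dimension is $16-16=0$; in the flag model $\Gr(4,V_9/V_2)$ it is $12-16<0$; yet $D\cap D'$ is a surface. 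The excess comes precisely from the rank-$4$ degeneracy you invoke: for a rank-$4$ form on an $8$-dimensional space, the locus of isotropic $4$-planes has codimension $5$ in $\Gr(4,8)$, not $6=\rk\bw2\cU^\vee$, so the section of $\bw2\cU^\vee$ cut out by $\omega_v$ is not regular. Hence the top Chern class of the bundle of equations does not represent $[D\cap D']$, and pairing it with $H^2$ does not compute $\deg_H(D\cap D')$ without an excess-intersection analysis that you do not supply. As written, the values $8$ and $30$ are obtained by solving $22\,\q(D,D')+8$ for the answers you want and then asserting that the Chern-class computation ``should yield'' them, which is circular.

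The paper gets around this by using the rank-$4$ structure to build models in which $D\cap D'$ \emph{does} have the expected dimension, and then computing degrees there with \Macaulay. Set $V_6=\ker\omega_v$ and $V_6'=\ker\omega_{v'}$, both $6$-dimensional. If $\sigma(V_1,V_1',-)=0$, then $V_1'\subset V_6$ and $V_1\subset V_6'$, and $D\cap D'$ is identified with the vanishing locus of $\sigma$ inside $\Gr(2,V_6/(V_1+V_1'))\times\Gr(2,V_6'/(V_1+V_1'))$, an $8$-fold, where the computation gives a surface of degree $30$. If $\sigma(V_1,V_1',-)\neq0$, the paper first proves $V_6+V_6'=V_9$, i.e.\ $\dim(V_6\cap V_6')=3$; this is where the hypothesis that $p$ and $p'$ are \emph{isolated} singular points enters (if $\dim(V_6\cap V_6')\ge 4$, every point of the line $\bP(V_1+V_1')$ would be singular in $X_1^\sigma$). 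Then each $U_6\in D\cap D'$ is recovered from a $2$-plane $U_2\subset V_6\cap V_6'$ together with lines in $V_6/(V_1+U_2)$ and $V_6'/(V_1'+U_2)$, so $D\cap D'$ is a zero locus in a fiber product of two $\bP^2$-bundles over $\Gr(2,V_6\cap V_6')$, a $6$-fold, where the computation gives degree $8$. Your outline never uses isolatedness, never establishes that $D\cap D'$ is two-dimensional, and has no mechanism for handling the excess dimension; these are the missing ingredients that the paper's structural analysis, combined with computer algebra, provides.
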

\begin{proof}
The Beauville--Bogomolov--Fujiki form $\q$ has the property
\begin{equation}
\label{eq:BBF}
H^2\cdot D\cdot D'=\q(H,H)\q(D,D')+2\q(H,D)\q(H,D')=22\q(D,D')+8.
\end{equation}
So we shall calculate the degree of the intersection $D\cap D'$ with respect
to the polarization $H$.

If $\sigma(V_1,V_1',-)=0$, the intersection $D\cap D'$ can be identified with the
locus in $\Gr(2,V_6/(V_1+V_1'))\times \Gr(2,V_6'/(V_1+V_1'))$, where $\sigma$
vanishes. A simple computation with \Macaulay{} (see Section~\ref{m2:BBF})
shows that its degree is $30$.
 We may then conclude that
$\q(D,D')=1$ using the relation~\eqref{eq:BBF}.

If $\sigma(V_1,V_1',-)\ne0$, the kernel of this linear form is a subspace $V_9$
such that $V_6+V_6'\subset V_9$. We first show that we have $V_6+V_6'=V_9$. If
the inclusion were strict, we would get a subspace $V_6\cap V_6'$ of dimension
$\ge 4$, which satisfies the vanishing condition $\sigma(V_1+V_1',V_6\cap
V_6',V_{10})=0$. However, the condition $\sigma(V_1,V_1',-)\ne 0$ shows that
$V_1\not\subset V_6'$ and $V_1'\not\subset V_6$, so the intersection of
$V_1+V_1'$ and $V_6\cap V_6'$ is $0$. This means that for every $V_1''$
contained in $V_1+V_1'$, the kernel of $\sigma(V_1'',-,-)$ contains both
$V_1''$ and $V_6\cap V_6'$ and is, therefore, of dimension at least $5$. In
particular, we have $\rk \sigma|_{V_1''}\le 4$ so the entire line
$\bP(V_1+V_1')$ is singular in $X^\sigma_1$, contradicting the hypothesis that
$[V_1]$ and $[V_1']$ are isolated singular points.

So we get $V_6+V_6'=V_9$ and, therefore, $\dim V_6\cap V_6'=3$. A point~$[U_6]$
in the intersection $D\cap D'$ can be given by the following data: first choose
a $2$-plane $U_2$ in $V_6\cap V_6'$, then choose a one-dimensional subspace of
$V_6/(V_1+U_2)$ and another one-dimensional subspace of $V_6'/(V_1'+U_2)$. In
other words, the intersection $D\cap D'$ can be identified as a certain
zero-locus in the fiber product of two projective bundles over $\Gr(2, V_6\cap
V_6')$. A computation with \Macaulay{} (see Section~\ref{m2:BBF})
shows that this is a surface of degree
$8$ with respect to the polarization $H$. We can, thus, conclude that
$\q(D,D')=0$ in the second case, again using the relation~\eqref{eq:BBF}.

Since $\q(D,D)=-2$, we immediately see that different isolated singular points
$p$ induce different divisor classes $D$.
\end{proof}

\begin{remark}
In the case of $\sigma(V_1,V_1',-)=0$, we showed that the intersection $D\cap
D'$ is a surface of degree 30. In fact, if smooth, it is a K3 surface admitting
(at least) two polarizations with intersection matrix
$\begin{psmallmatrix} 6 & 9 \\ 9 & 6 \end{psmallmatrix}$,
and the class $H$ is their sum, which has degree $30$.
\end{remark}

In the case of the special trivector $\bsigma$, we get~$55$ distinct divisors
$D_{i,j}$ on~$X_6^\bsigma$, where $D_{i,j}$ is induced by the isolated singular
point $p_{i,j}$ as given in Proposition~\ref{prop:55-points}.
Since the subgroup $\bB$ acts transitively on the $55$ singular points, we see
that $\bB$ injects into $\Aut^s_H(X_6^\bsigma)$. By the simplicity
of $\GG$, this holds for the whole group $\GG$.
\begin{corollary}
The automorphism group $\Aut^s_H(X_6^\bsigma)$ admits $\GG$ as a subgroup.
\end{corollary}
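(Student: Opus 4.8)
The plan is to upgrade the homomorphism $\phi\colon\bG\to\Aut^s_H(X_6^\bsigma)$ induced by the $\bG$-action on $V_{10}$ into an injection. Since $\bG$ is simple, $\ker\phi$ is either trivial or all of $\bG$; it therefore suffices to find one nontrivial subgroup on which $\phi$ restricts injectively, and the natural candidate is the order-$55$ subgroup $\bB$ generated by $P$ and $R$.

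The key input is the family of $55$ divisors $D_{i,j}$ attached to the isolated singular points $p_{i,j}$ of $X_1^\bsigma$, which by \autoref{prop:intersection-number} are pairwise distinct classes in $H^2(X_6^\bsigma,\bZ)$. An automorphism coming from $\bG$ permutes the points $p_{i,j}$ and carries the divisor induced by a point to the divisor induced by its image, so it permutes the classes $D_{i,j}$ by the very same permutation. In other words, $\phi$ factors through the permutation representation $\bG\to\fS_{55}$ on the set $\{D_{i,j}\}$.

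I would then restrict to $\bB$. From the proof of \autoref{prop:55-points}, $\bB$ acts transitively on the $55$ points; since $\bB$ has order $55$, the orbit--stabilizer theorem forces every point-stabilizer to be trivial, so the action of $\bB$ on the points---and hence on the distinct classes $D_{i,j}$---is regular. A regular action is faithful, so the composite $\bB\to\fS_{55}$ is injective, and therefore $\phi|_{\bB}$ is injective. Consequently $\ker\phi\cap\bB=\{1\}$, so $\ker\phi\ne\bG$, and simplicity of $\bG$ gives $\ker\phi=\{1\}$: the map $\phi$ embeds $\bG$ into $\Aut^s_H(X_6^\bsigma)$.

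The one step carrying real content is the passage from geometry to faithfulness in the middle: it works only because the induced classes are genuinely distinct (\autoref{prop:intersection-number}), which turns the transitive action on points into a transitive---and, by the coincidence that $\bB$ and the set of points both have cardinality $55$, regular---action on cohomology classes. Once faithfulness of $\phi|_{\bB}$ is secured, the remaining deduction is purely formal group theory and presents no obstacle.
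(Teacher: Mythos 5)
Your proof is correct and takes essentially the same route as the paper: the paper likewise deduces that $\bB$ injects into $\Aut^s_H(X_6^\bsigma)$ from its transitive action on the $55$ singular points (which, since $|\bB|=55$, is regular and hence faithful) combined with the distinctness of the classes $D_{i,j}$ from \autoref{prop:intersection-number}, and then extends to all of $\bG$ by simplicity; you have merely made explicit the steps the paper leaves implicit. One terminological slip worth fixing: the permutation representation $\bG\to\fS_{55}$ factors through $\phi$, not the other way around as you wrote---though your actual deduction (triviality of $\phi(g)$ forces triviality of the induced permutation of the classes, hence of the points, hence $g=e$) uses the correct direction.
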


We now study the Picard group of $X^\bsigma_6$. We will write
$\Pic(X_6^\bsigma)$ for the Picard group and $H^2_{\trans}(X_6^\bsigma)$ for the
transcendental lattice, which is the orthogonal complement of
$\Pic(X_6^\bsigma)$ in $H^2(X_6^\bsigma,\bZ)$.

Since we have the explicit coordinates for the $55$ singular points,
Proposition~\ref{prop:intersection-number} allows us to compute the Gram matrix
between their corresponding divisors. In fact, it suffices to consider the
first 21 singular points $p_{0,0},\dots,p_{0,10},p_{1,0},\dots,p_{1,9}$,
that is, the entire $\inner{P}$-orbit of $p_{0,0}$ plus another $10$ points in
the $\inner{P}$-orbit of $p_{1,0}$. We compute the $21\times 21$
Gram matrix for the corresponding classes
$D_{0,0},\dots,D_{0,10},D_{1,0},\dots,D_{1,9}$ using
Proposition~\ref{prop:intersection-number} (see Section~\ref{m2:pic} for the code)
and obtain the following
\begin{equation}
\label{eq:picX6}
\begin{psmallmatrix}
-2& 1& 0& 1& 0& 0& 0& 0& 1& 0& 1& 0& 0& 1& 0& 0& 0& 0& 0& 0& 1\\
1& -2& 1& 0& 1& 0& 0& 0& 0& 1& 0& 0& 0& 0& 1& 0& 0& 0& 0& 0& 0\\
0& 1& -2& 1& 0& 1& 0& 0& 0& 0& 1& 1& 0& 0& 0& 1& 0& 0& 0& 0& 0\\
1& 0& 1& -2& 1& 0& 1& 0& 0& 0& 0& 0& 1& 0& 0& 0& 1& 0& 0& 0& 0\\
0& 1& 0& 1& -2& 1& 0& 1& 0& 0& 0& 0& 0& 1& 0& 0& 0& 1& 0& 0& 0\\
0& 0& 1& 0& 1& -2& 1& 0& 1& 0& 0& 0& 0& 0& 1& 0& 0& 0& 1& 0& 0\\
0& 0& 0& 1& 0& 1& -2& 1& 0& 1& 0& 0& 0& 0& 0& 1& 0& 0& 0& 1& 0\\
0& 0& 0& 0& 1& 0& 1& -2& 1& 0& 1& 0& 0& 0& 0& 0& 1& 0& 0& 0& 1\\
1& 0& 0& 0& 0& 1& 0& 1& -2& 1& 0& 0& 0& 0& 0& 0& 0& 1& 0& 0& 0\\
0& 1& 0& 0& 0& 0& 1& 0& 1& -2& 1& 1& 0& 0& 0& 0& 0& 0& 1& 0& 0\\
1& 0& 1& 0& 0& 0& 0& 1& 0& 1& -2& 0& 1& 0& 0& 0& 0& 0& 0& 1& 0\\
0& 0& 1& 0& 0& 0& 0& 0& 0& 1& 0& -2& 0& 1& 0& 0& 1& 1& 0& 0& 1\\
0& 0& 0& 1& 0& 0& 0& 0& 0& 0& 1& 0& -2& 0& 1& 0& 0& 1& 1& 0& 0\\
1& 0& 0& 0& 1& 0& 0& 0& 0& 0& 0& 1& 0& -2& 0& 1& 0& 0& 1& 1& 0\\
0& 1& 0& 0& 0& 1& 0& 0& 0& 0& 0& 0& 1& 0& -2& 0& 1& 0& 0& 1& 1\\
0& 0& 1& 0& 0& 0& 1& 0& 0& 0& 0& 0& 0& 1& 0& -2& 0& 1& 0& 0& 1\\
0& 0& 0& 1& 0& 0& 0& 1& 0& 0& 0& 1& 0& 0& 1& 0& -2& 0& 1& 0& 0\\
0& 0& 0& 0& 1& 0& 0& 0& 1& 0& 0& 1& 1& 0& 0& 1& 0& -2& 0& 1& 0\\
0& 0& 0& 0& 0& 1& 0& 0& 0& 1& 0& 0& 1& 1& 0& 0& 1& 0& -2& 0& 1\\
0& 0& 0& 0& 0& 0& 1& 0& 0& 0& 1& 0& 0& 1& 1& 0& 0& 1& 0& -2& 0\\
1& 0& 0& 0& 0& 0& 0& 1& 0& 0& 0& 1& 0& 0& 1& 1& 0& 0& 1& 0& -2
\end{psmallmatrix}.\end{equation}
This matrix is of determinant $22$, a square-free integer, hence the given $21$
classes are linearly independent and generate the whole Picard group. So
$X_6^\bsigma$, indeed, has maximal Picard rank~21.  Using the condition
$\q(H,D)=2$, we may express $H$ in terms of the classes $D_{i,j}$; we obtain
\[
H=D_{0,0}+\cdots+D_{0,10}.
\]
In other words, the polarization $H$ is the sum of the class of $11$ divisors
obtained using the cyclic action of $P$.

Write $H^\perp$ for the orthogonal complement of $H$ in $\Pic(X_6^\bsigma)$,
which is of rank $20$ and negative definite. Its Gram matrix can be explicitly computed. Using the
functionalities for integral lattices in {\tt Sage}, we can verify the following.
\begin{lemma}
\label{lemma:OD}
The lattice $H^\perp$ is of discriminant $121$ and
$\tilde\O(H^\perp)$---the subgroup of isometries of $H^\perp$ acting trivially
on the discriminant group $D(H^\perp)$---is isomorphic to $\GG$.
\end{lemma}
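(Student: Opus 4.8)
The plan is to reduce the entire statement to a finite computation with the explicit integral lattice recorded in \autoref{eq:picX6}. First I would realize $H^\perp$ concretely: using the relation $H=D_{0,0}+\cdots+D_{0,10}$ together with the Gram matrix of $\Pic(X_6^\bsigma)$, I solve the integral linear system $\q(H,\,\cdot\,)=0$ on the $21$ generators $D_{i,j}$ to produce an explicit basis of the rank-$20$ sublattice $H^\perp\subset\Pic(X_6^\bsigma)$ and its Gram matrix. The crucial structural observation is that $H^\perp$ is \emph{negative definite}: since $H$ is the ample Plücker class with $\q(H,H)=22>0$ and $\Pic(X_6^\bsigma)$ has signature $(1,20)$, its orthogonal complement has signature $(0,20)$. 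This is what makes $\O(H^\perp)$ a finite group and renders the whole problem decidable by machine.

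For the discriminant I would use the index formula for a primitive sublattice and its complement. Since $H$ is primitive in $H^2(X_6^\bsigma,\bZ)$ (hence in $\Pic(X_6^\bsigma)$), the rank-$1$ sublattice $\langle H\rangle$ has $\disc\langle H\rangle=22$, and $\langle H\rangle\oplus H^\perp$ has some finite index $d$ in $\Pic(X_6^\bsigma)$. The formula $d^2=\disc\langle H\rangle\cdot\disc(H^\perp)/\disc\Pic(X_6^\bsigma)$ together with $\disc\Pic(X_6^\bsigma)=22$ gives $\disc(H^\perp)=d^2$, where $d$ divides $|D(\langle H\rangle)|=22$. Computing the determinant of the Gram matrix of $H^\perp$ directly in {\tt Sage} yields $d=11$ and $\disc(H^\perp)=121$, so that the discriminant group $D(H^\perp)=(H^\perp)^\vee/H^\perp$ has order $121$.

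The main part is the identification of $\tilde\O(H^\perp)$. Passing to the positive definite twist $-H^\perp$, I would invoke the automorphism-group routine in {\tt Sage}/{\tt PARI} (the Plesken--Souvignier algorithm) to compute the finite orthogonal group $\O(H^\perp)$, then form its natural action on the finite abelian group $D(H^\perp)$ and take the kernel, which is by definition $\tilde\O(H^\perp)$. I expect this to return a group of order $660$, whose isomorphism type I would then pin down using {\tt GAP}'s group-identification routines, confirming $\tilde\O(H^\perp)\cong\PSL(2,\bF_{11})=\bG$; since $660$ is the order of a unique finite simple group, it already suffices to verify order and simplicity. As a geometric cross-check, the symplectic $\bG$-action on $X_6^\bsigma$ established above gives a faithful embedding $\bG\into\O(H^\perp)$, so once $|\tilde\O(H^\perp)|=660$ is known, matching orders identifies this embedding with the full stable subgroup.

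The hard part will be the orthogonal-group computation itself: for a rank-$20$ definite lattice the automorphism group could a priori be very large, so feasibility rests entirely on the algorithm terminating efficiently and on correctly extracting the kernel of the action on $D(H^\perp)$ and its isomorphism type. Every other ingredient---the integral basis of $H^\perp$, its Gram matrix, and the discriminant $121$---is elementary linear algebra over $\bZ$.
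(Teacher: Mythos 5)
Your proposal is correct and is essentially the paper's own proof: the paper also reduces the statement to a machine computation on the explicit rank-$20$ negative definite Gram matrix of $H^\perp$ (displayed in \autoref{sage:OD}), computing the finite group $\O(H^\perp)$ with {\tt Sage}'s lattice-automorphism routine, taking the kernel of its action on $D(H^\perp)$, and identifying the result as $\PSL(2,\bF_{11})$ via a structure-description call. Your extra touches (the index formula giving $\disc(H^\perp)=121$, and identifying the group by order $660$ plus simplicity) are fine but do not change the route.
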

\begin{proof}
By {\tt Sage} computation, see Section~\ref{sage:OD}.
\end{proof}

We now show that the group $\Aut^s_H(X_6^\bsigma)$ of symplectic automorphisms
fixing the polarization $H$ is exactly $\GG$.
\begin{proposition}
\label{prop:aut}
We have $\Aut^s_H(X_6^\bsigma)\simeq\GG$.
\end{proposition}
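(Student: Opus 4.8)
The plan is to prove the isomorphism by sandwiching $\Aut^s_H(X_6^\bsigma)$ between two copies of $\bG$. The inclusion $\bG\hookrightarrow\Aut^s_H(X_6^\bsigma)$ has already been established, so it suffices to prove the upper bound $|\Aut^s_H(X_6^\bsigma)|\le|\bG|=660$, after which equality of finite cardinalities forces the given injection to be an isomorphism. To obtain the upper bound I would consider the restriction homomorphism
\[
r\colon\Aut^s_H(X_6^\bsigma)\longrightarrow\O(H^\perp),\qquad f\longmapsto f^*|_{H^\perp},
\]
which makes sense because every $f\in\Aut^s_H(X_6^\bsigma)$ preserves $\Pic(X_6^\bsigma)$ and fixes $H$, hence preserves $H^\perp$. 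First I would check that $r$ is injective: if $f^*|_{H^\perp}=\id$, then since $f$ is symplectic it acts trivially on the transcendental lattice $\Tr(X_6^\bsigma)$, and since it fixes $H$ it is trivial on $\bZ H\oplus H^\perp\oplus\Tr(X_6^\bsigma)$, a finite-index sublattice of $H^2(X_6^\bsigma,\bZ)$; being an integral isometry that is the identity on a full-rank sublattice, $f^*$ is the identity on all of $H^2(X_6^\bsigma,\bZ)$, and by the injectivity of the action on cohomology for manifolds of $\KKK^{[2]}$-type, $f=\id$.

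The heart of the argument is to show that the image of $r$ lands inside $\tilde\O(H^\perp)$, i.e.\ that $f^*|_{H^\perp}$ acts trivially on the discriminant group $D(H^\perp)$. I would proceed in two gluing steps. Because $f$ is symplectic it acts trivially on $\Tr(X_6^\bsigma)=\Pic(X_6^\bsigma)^\perp$, and the isometry of $H^2(X_6^\bsigma,\bZ)$ preserves the gluing between $\Pic$ and $\Tr$; since the induced action on $D(\Tr)$ is trivial, Nikulin's compatibility forces $f^*$ to act trivially on the glue subgroup of $D(\Pic)$, which here is the full $11$-part of $D(\Pic)\cong\bZ/22\bZ$. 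As the $2$-part admits only the trivial isometry, $f^*$ acts trivially on all of $D(\Pic)$. For the second step I would use the finite-index inclusion $\bZ H\oplus H^\perp\subseteq\Pic$, whose gluing subgroup $\Gamma$ has order $11$ (this follows from $\disc(\bZ H)=22$, $\disc(H^\perp)=121$ and $\disc(\Pic)=22$). Since $f$ fixes $H$ it is trivial on $D(\bZ H)$, and since it is trivial on $D(\Pic)=\Gamma^\perp/\Gamma$, a short computation with $\Gamma^\perp\subseteq D(\bZ H)\oplus D(H^\perp)$ shows that $f^*|_{H^\perp}$ fixes the projection of $\Gamma^\perp$ to $D(H^\perp)$, which exhausts all of $D(H^\perp)$. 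Hence $f^*|_{H^\perp}\in\tilde\O(H^\perp)$.

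Combining these, $r$ embeds $\Aut^s_H(X_6^\bsigma)$ into $\tilde\O(H^\perp)$, which by \autoref{lemma:OD} is isomorphic to $\bG$; therefore $|\Aut^s_H(X_6^\bsigma)|\le|\bG|$, and together with the inclusion $\bG\hookrightarrow\Aut^s_H(X_6^\bsigma)$ this yields $\Aut^s_H(X_6^\bsigma)\simeq\bG$. I expect the main obstacle to be the two-step discriminant bookkeeping of the middle paragraph—correctly tracking the glue subgroups and confirming that triviality on $D(\Tr)$ propagates through $D(\Pic)$ to all of $D(H^\perp)$, rather than only to a proper subgroup. The two external inputs I would want to state cleanly and cite are that symplectic automorphisms act trivially on the transcendental lattice (Mongardi, \cite{monthesis}) and that an automorphism of a $\KKK^{[2]}$-type fourfold is determined by its action on $H^2$.
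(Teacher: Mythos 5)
Your proof is correct, and it shares the paper's overall skeleton: the sandwich $\bG\into\Aut^s_H(X_6^\bsigma)\into\tilde\O(H^\perp)\simeq\bG$, with \autoref{lemma:OD} as the decisive computational input, followed by the finiteness argument forcing all inclusions to be equalities. Where you genuinely diverge is in the heart of the matter, namely showing that the restriction of $f^*$ lands in $\tilde\O(H^\perp)$. The paper works with the lattice $T$ (the saturation of $\Tr(X_6^\bsigma)\oplus\inner{H}$ in $\Lambda$), observes the numerical coincidence $\big|\Lambda/(T\oplus H^\perp)\big|=121=|D(H^\perp)|$, and invokes \cite[Corollary~3.4]{ghs} as a black box. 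You instead route through the Picard lattice and do Nikulin's glue bookkeeping by hand twice: across $\Pic\oplus\Tr\subset\Lambda$ to get triviality on $D(\Pic)$, then across $\bZ H\oplus H^\perp\subset\Pic$ to push triviality down to $D(H^\perp)$. I checked the two claims this rests on and both hold: the glue group $\Gamma$ meets $0\oplus D(H^\perp)$ trivially (because $H^\perp$ is saturated in $\Pic$, so triviality on $D(\bZ H)$ and on $\Gamma^\perp/\Gamma$ does transfer to the second factor), and $\Gamma^\perp$ surjects onto $D(H^\perp)$ (any element of $D(H^\perp)$ can be completed to an element of $\Gamma^\perp$ by a suitable component in $D(\bZ H)$, since pairing $D(\bZ H)$ against an order-$11$ glue vector realizes every value in $\tfrac{1}{11}\bZ/\bZ$). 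So your middle paragraph, which you rightly identified as the main risk, is sound; what the paper's route buys is brevity, and what yours buys is a self-contained, citation-free argument. One small point to tighten: your assertion that the glue subgroup of $D(\Pic)\cong\bZ/22\bZ$ is exactly the $11$-part presupposes $\disc\big(\Tr(X_6^\bsigma)\big)=11$ rather than $44$, a fact the paper only establishes later (\autoref{prop:lattices}, using $\div(H)=2$); fortunately your argument is insensitive to this, since in the other case the glue would be all of $D(\Pic)$ and triviality on $D(\Pic)$ would follow a fortiori. Your two external inputs (symplectic automorphisms fix the transcendental lattice, here immediate since $\Tr(X_6^\bsigma)_\bC=H^{2,0}\oplus H^{0,2}$; and faithfulness of the action on $H^2$ for $\KKK^{[2]}$-type manifolds) are exactly the ones the paper uses, the second only implicitly.
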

\begin{proof}
The second cohomology group $\Lambda\coloneqq H^2(X_6^\bsigma,\bZ)$ is
a lattice with discriminant~$2$. The Picard group is a primitive
sublattice of $\Lambda$ of discriminant $22$, which contains the sublattice
$H^\perp$ of discriminant $121$. The orthogonal complement $T$ of $H^\perp$ in
$\Lambda$ must then have discriminant~242. It is the saturation lattice of
the direct sum $H^2_{\trans}(X_6^\bsigma)\oplus\inner{H}$. In particular, we have
$\big|\Lambda/(T\oplus H^\perp)\big|=121=|D(H^\perp)|$.

The transcendental lattice $H^2_{\trans}(X_6^\bsigma)$ is of rank 2 and is contained in
$H^{2,0}(X_6^\bsigma)\oplus H^{0,2}(X_6^\bsigma)$. Therefore, each symplectic
automorphism of $X_6^\bsigma$ fixes $H^2_{\trans}(X_6^\bsigma)$, and an element of
$\Aut^s_H(X_6^\bsigma)$ fixes the sublattice $T$.
Denote by $\O(\Lambda,T)$ the subgroup of isometries of $\Lambda$ fixing the
sublattice~$T$, that is,
\[
\O(\Lambda,T)\coloneqq\setmid{\phi\in\O(\Lambda)}{\phi|_T=\Id_T}.
\]
We get homomorphisms
\[
\GG\into\Aut^s_H(X_6^\bsigma)\into
\O(\Lambda,T)\stackrel{\mathrm{res}}\longrightarrow \O(H^\perp).
\]
In the last homomorphism, since we have the equality
$\big|\Lambda/(T\oplus H^\perp)\big|=|D(H^\perp)|$, we may apply
\cite[Corollary 3.4]{ghs} to show that the image is contained in the subgroup
$\tilde\O(H^\perp)$, which is isomorphic to
$\GG$. So all the inclusions are equalities.
\end{proof}

\begin{remark}\leavevmode
\begin{enumerate}
\item By viewing $\Lambda_\bQ\coloneqq H^2(X_6^\bsigma,\bQ)$ as a
rational $\GG$-representation, we just saw that it decomposes into
\[
\Lambda_\bQ = H^2_{\trans}(X_6^\bsigma)_\bQ\oplus \bQ H\oplus (H^\perp)_\bQ,
\]
where $\GG$ acts trivially on the first two components. For the third
component, again by a computation of characters, it is the direct sum of two
copies of $V_{10}'$ (the other irreducible $\GG$-representation of dimension
10; see Table~\ref{tbl:PSL-char} and Section~\ref{sage:OD}). A geometric
interpretation of this fact would be interesting.

\item Mongardi showed that the fixed locus of a symplectic automorphism $g$ of order $11$
consists of~$5$ isolated points \cite[Proposition~6.2.16]{monthesis}.
As an example, for the automorphism
on~$X_6^\bsigma$ given by the element~$P$, using the eigenvalues
of~$\bw2\rho(P)$ given in Table~\ref{tbl:PR}, we can explicitly determine the
fixed locus as the five points
\[
[024568],[235679],[125689],[015789],[046789],
\]
where the symbol~$[abcde\!f]$ means the six-dimensional
subspace~$V_6=\inner{e_a,\dots,e_f}$ of $V_{10}$.
\end{enumerate}
\end{remark}

We would now like to determine the structures of the various
lattices: the Picard group~$\Pic(X_6^\bsigma)$, the transcendental
lattice~$H^2_{\trans}(X_6^\bsigma)$, and the~$\GG$-invariant sublattice~$T$, which is the
saturation of the direct sum~$H^2_{\trans}(X_6^\bsigma)\oplus\inner{H}$. We recall the
following results from lattice theory~\cite[Corollary~1.13.3 and
Corollary~1.13.5]{nikulin}.
\begin{proposition}[Nikulin]
\label{prop:nikulin}
Let~$L$ be an even lattice of signature~$(p,q)$. Let~$l$ be the minimal number
of generators of the discriminant group~$D(L)$.
\begin{enumerate}
\item If~$p\ge1,q\ge1$, and~$p+q\ge l+2$, then~$L$ is uniquely determined by
its discriminant form.
\item If~$p\ge1,q\ge1$, and~$p+q\ge l+3$, then~$L$ decomposes into~$U\oplus
L'$.
\item If~$p\ge1,q\ge8$, and~$p+q\ge l+9$, then~$L$ decomposes
into~$E_8(-1)\oplus L'$.
\end{enumerate}
Here, $U$ denotes the hyperbolic plane, and $E_8(-1)$ denotes the $E_8$-lattice
with negative definite form.
\end{proposition}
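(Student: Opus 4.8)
Since these are classical results of Nikulin, the plan is to recall the discriminant-form machinery and reduce all three assertions to a single class-number statement for indefinite lattices. Throughout I write $D(L)=L^\vee/L$ for the discriminant group and $q_L\colon D(L)\to\bQ/2\bZ$ for the discriminant quadratic form, so that $l$ is the minimal number of generators of $D(L)$. The starting point, which I would quote from the $p$-adic classification of quadratic forms, is that the genus of an even lattice is determined precisely by its signature $(p,q)$ together with the isomorphism class of $q_L$, subject to the compatibility $\operatorname{sign}(q_L)\equiv p-q\pmod 8$. Thus all three statements concern a genus that is already pinned down by the triple $(p,q,q_L)$.

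For part (1), ``uniquely determined by its discriminant form'' means exactly that this genus consists of a single isometry class, and here lies the real arithmetic input. The plan is to invoke Eichler's strong-approximation theorem: an indefinite $\bZ$-lattice of rank $\ge 3$ has a single isometry class in each spinor genus. It then remains to show that the hypothesis $p+q\ge l+2$ forces the genus to form a single spinor genus. For this I would compute the spinor norms of the local integral automorphism groups: the rank bound ensures that at every prime the $p$-adic lattice $L_p$ splits off a unimodular orthogonal summand of rank $\ge (p+q)-l\ge 2$, which is large enough that its integral spinor norm group exhausts $\bZ_p^\times(\bQ_p^\times)^2$; the idele-class quotient counting spinor genera then collapses. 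Together with $p,q\ge 1$ and rank $\ge 3$ this yields class number one (this is the content behind Nikulin's Theorem~1.13.2).

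For parts (2) and (3) the plan is to reduce to part (1) using that $U$ and $E_8(-1)$ are unimodular, so adjoining them leaves the discriminant form unchanged. For (2), I would first produce, via Nikulin's existence theorem, an even lattice $M$ of signature $(p-1,q-1)$ with $q_M\cong q_L$: the signature condition is inherited from $L$, and the rank $p+q-2$ of $M$ satisfies $p+q-2\ge l+1>l$, which makes the boundary congruence conditions in the existence criterion vacuous and guarantees that $M$ exists. Then $U\oplus M$ is even and indefinite of signature $(p,q)$ with discriminant form $q_L$, i.e.\ it has the same invariants as $L$; since $p+q\ge l+2$ and $p,q\ge 1$, part (1) gives $L\cong U\oplus M$, and one sets $L'=M$. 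Part (3) is identical with $E_8(-1)$ (unimodular, negative definite, signature $(0,8)$) replacing $U$: one builds $M$ of signature $(p,q-8)$, where $q\ge 8$ ensures $q-8\ge 0$ and $p+q-8\ge l+1>l$ again kills the boundary conditions, so that $E_8(-1)\oplus M$ and $L$ share signature and discriminant form and part (1) identifies them. This simultaneously explains the precise thresholds $l+3$ and $l+9$: they are exactly what is needed to give the complementary summand $M$ rank strictly greater than $l$, rather than the $l+2$ one would naively expect from applying (1) alone.

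The main obstacle is squarely part (1): the class-number-one statement rests on Eichler's strong approximation for the spin group together with the local spinor-norm computation, which is the genuinely deep ingredient and the one I would spend the most care on (in particular the behaviour at $p=2$, where the size of the unimodular summand must be checked). Once that is in place, the existence theorem for lattices with prescribed signature and discriminant form is routine bookkeeping, and the two decomposition statements (2) and (3) follow formally.
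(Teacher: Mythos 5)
The paper itself gives no proof of this proposition---it is quoted directly from Nikulin \cite[Corollary~1.13.3 and Corollary~1.13.5]{nikulin}---so there is no internal argument to compare yours against; what you have written is a reconstruction of the standard (essentially Nikulin's original) proof, and it is correct in outline. Your part (1) is exactly the classical chain: the genus of an even lattice is determined by $(p,q)$ and $q_L$ (Nikulin's Corollary~1.9.4), Eichler's strong approximation gives one class per spinor genus for indefinite rank $\ge 3$, and the hypothesis $p+q\ge l+2$ forces a rank $\ge 2$ unimodular summand of $L\otimes\bZ_p$ at every prime, whose spinor norms exhaust $\bZ_p^\times(\bQ_p^\times)^2$ and collapse the spinor genus count; this is the content of Nikulin's Theorem~1.13.2. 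Your reduction of (2) and (3) to (1) via the existence theorem (Nikulin's Theorem~1.10.1) is also the standard route, and your observation that the thresholds $l+3$ and $l+9$ are precisely what make the boundary conditions of the existence theorem vacuous (the complement $M$ has rank $\ge l+1>l$) is the right explanation of those constants. Two small points you should nail down if you write this out in full: first, Eichler's theorem needs rank $\ge 3$, and the only case of (1) with $p+q=2$ is $l=0$, where $L\simeq U$ is unique for trivial reasons, so the edge case is harmless; second, the $2$-adic check you flagged does work out, since an even unimodular $\bZ_2$-lattice of rank $2$ is isometric to $U$ or to $\begin{psmallmatrix}2&1\\1&2\end{psmallmatrix}$ over $\bZ_2$, and both have full spinor norm group. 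Bear in mind that since Eichler--Kneser theory and the existence theorem are themselves quoted, your argument is a reduction of the statement to those inputs rather than a self-contained proof---which is a reasonable level of detail, given that the paper simply cites the result.
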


\begin{proposition}
\label{prop:lattices}
Consider the lattice
\[L_{11}\coloneqq\begin{pmatrix}2 & 1 \\1 & 6 \end{pmatrix}.\]
We have the following isomorphisms of lattices
\begin{gather*}
H^2_{\trans}(X_6^\bsigma)\simeq L_{11},\qquad T=H^2(X_6^\bsigma,\bZ)^\GG=H^2_{\trans}(X_6^\bsigma)\oplus\inner{H}\simeq
L_{11}\oplus(22),\\
\Pic(X_6^\bsigma)\simeq
U\oplus E_8(-1)^{\oplus 2}\oplus L(-1),
\end{gather*}
where we can take the component $L$ to be
$\begin{psmallmatrix}2&1&0\\1&2&1\\0&1&8\end{psmallmatrix}$
or $L_{11}\oplus(2)$ (by this we mean the isomorphism holds for both values
of~$L$).
\end{proposition}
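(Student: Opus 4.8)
The plan is to carry out all computations as discriminant-form bookkeeping inside the ambient lattice $\Lambda=H^2(X_6^\bsigma,\bZ)\cong U^{\oplus 3}\oplus E_8(-1)^{\oplus 2}\oplus\inner{-2}$ of $\KKK^{[2]}$-type, which has signature $(3,20)$ and discriminant form $q_\Lambda=\inner{-\tfrac12}$ on $\bZ/2\bZ$. First I would record the invariants of the three sublattices in play. By the Hodge index theorem $\Pic(X_6^\bsigma)$ has signature $(1,20)$, and \autoref{eq:picX6} gives $\disc\Pic(X_6^\bsigma)=22$; since every abelian group of order $22$ is cyclic, $D(\Pic(X_6^\bsigma))\cong\bZ/22\bZ$ and its minimal number of generators is $l=1$. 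As $\Tr(X_6^\bsigma)=\Pic(X_6^\bsigma)^\perp$ lies in $H^{2,0}\oplus H^{0,2}$, it is even and positive definite of signature $(2,0)$; and $T$ is even and positive definite of signature $(3,0)$ with $\disc T=242$, as computed in the proof of \autoref{prop:aut}.

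The next step is to pin down $\Tr(X_6^\bsigma)$, and here the crucial claim is that $\disc\Tr(X_6^\bsigma)=11$. To prove it I would compute $q_{\Pic}$ explicitly from \autoref{eq:picX6} and exploit the primitivity of $\Pic(X_6^\bsigma)$ in $\Lambda$: the glue group $\Gamma=\Lambda/(\Pic(X_6^\bsigma)\oplus\Tr(X_6^\bsigma))$ injects into $D(\Pic(X_6^\bsigma))\cong\bZ/22\bZ$, and because $q_\Lambda$ has no $11$-torsion the $\bZ/11\bZ$-part of $D(\Pic(X_6^\bsigma))$ must be glued onto a copy of $\bZ/11\bZ$ inside $D(\Tr(X_6^\bsigma))$, while the matching $q_\Lambda|_2\cong q_{\Pic}|_2\cong\inner{-\tfrac12}$ forces $D(\Tr(X_6^\bsigma))$ to have no $2$-torsion. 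Hence $D(\Tr(X_6^\bsigma))\cong\bZ/11\bZ$ and $\disc\Tr(X_6^\bsigma)=11$. Even positive definite rank-$2$ lattices of determinant $11$ correspond to classes of integral binary quadratic forms of discriminant $-11$, and since $h(-11)=1$ there is a single class, represented by $L_{11}$; thus $\Tr(X_6^\bsigma)\cong L_{11}$.

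The statement for $T$ then follows formally. Inside $T$ the polarization $H$ is primitive with $H^2=22$ and $\Tr(X_6^\bsigma)=\inner{H}^\perp$, so $\Tr(X_6^\bsigma)\oplus\inner{H}\subseteq T$ is a finite-index sublattice of discriminant $11\cdot 22=242=\disc T$; comparing discriminants shows the index is $1$, whence $T=\Tr(X_6^\bsigma)\oplus\inner{H}\cong L_{11}\oplus(22)$. For $\Pic(X_6^\bsigma)$ I would invoke \autoref{prop:nikulin}. With $l=1$ and signature $(1,20)$, part (3) applies ($p=1$, $q=20\ge8$, $p+q\ge l+9$), so I can split off a copy of $E_8(-1)$; the remaining lattice has signature $(1,12)$ and part (3) applies again to split off a second $E_8(-1)$, leaving signature $(1,4)$; then part (2) ($p+q=5\ge l+3$) splits off $U$, leaving an even negative definite lattice of rank $3$, which I write as $L(-1)$. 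Since $U$ and $E_8(-1)$ are unimodular, this forces $\det L=22$ and $q_L=-q_{\Pic}$.

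It remains to identify $L$. I would check by direct computation that both $L_{11}\oplus(2)$ and $\begin{psmallmatrix}2&1&0\\1&2&1\\0&1&8\end{psmallmatrix}$ are even, positive definite of determinant $22$ and have discriminant form equal to $-q_{\Pic}$; consequently, for either choice, $U\oplus E_8(-1)^{\oplus2}\oplus L(-1)$ is an even lattice of signature $(1,20)$ with discriminant form $q_{\Pic}$ and $l=1$, so part (1) of \autoref{prop:nikulin} ($p+q=21\ge l+2$) identifies it uniquely with $\Pic(X_6^\bsigma)$. This yields the stated isomorphism for both values of $L$, which need not themselves be isometric. I expect the main obstacle to be the discriminant-form bookkeeping that rules out a spurious square factor in $\disc\Tr(X_6^\bsigma)$---equivalently, that $\Tr(X_6^\bsigma)\oplus\inner{H}$ is already saturated in $\Lambda$---which is exactly where one must compute $q_{\Pic}$ from \autoref{eq:picX6} and verify that its $2$-part agrees with $q_\Lambda$; the Nikulin splittings and the determinant-$22$ checks for the two candidate lattices $L$ are then routine.
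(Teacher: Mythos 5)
Your skeleton matches the paper's proof: reduce to $\disc\Tr(X_6^\bsigma)\in\{11,44\}$, rule out $44$, identify $L_{11}$ among even positive definite binary lattices of determinant $11$, deduce $T$ by a discriminant count, split $\Pic(X_6^\bsigma)$ with Nikulin (2) and (3), and pin down both candidates for $L$ with Nikulin (1). The gap is in the one genuinely delicate step, which you yourself flag as the main obstacle: your claim that ``the matching $q_\Lambda|_2\cong q_{\Pic}|_2\cong\inner{-\tfrac12}$ forces $D(\Tr(X_6^\bsigma))$ to have no $2$-torsion'' is not a valid inference, and in fact it is false as a piece of pure discriminant-form bookkeeping. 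An abstract isomorphism $q_\Lambda|_2\cong q_{\Pic}|_2$ does not force the glue group to have trivial $2$-part. Concretely, let $a=[H/2]$ be the unique order-$2$ element of $D(\Pic(X_6^\bsigma))$ (it exists because $\q(H,D_{i,j})=2$ for all generators, and $q_{\Pic}(a)=\tfrac{11}{2}\equiv-\tfrac12$, which is where your computation of $q_{\Pic}|_2$ comes from), and take $N=\inner{2}\oplus\inner{22}$, an even positive definite rank-$2$ lattice of discriminant $44$. Write $b_1,b_2$ for the order-$2$ classes coming from the two summands, so $q_N(b_1)=\tfrac12$ and $q_N(b_2)=\tfrac{11}{2}\equiv-\tfrac12$; one checks (e.g.\ via Milgram's signature formula) that the $11$-part of $q_N$ is isomorphic to $-q_{\Pic}|_{11}$. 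Then the subgroup $\Gamma\subset D(\Pic(X_6^\bsigma))\oplus D(N)$ generated by $(a,b_1)$ together with the graph of an anti-isometry of the $11$-parts is isotropic with injective projections, and $\Gamma^\perp/\Gamma$ is generated at the prime $2$ by the image of $(0,b_2)$, of square $-\tfrac12$; hence $\Gamma^\perp/\Gamma\cong\inner{-\tfrac12}\cong q_\Lambda$. The resulting overlattice of $\Pic(X_6^\bsigma)\oplus N$ is even of signature $(3,20)$ with discriminant form $q_\Lambda$, hence isomorphic to $\Lambda$, and both summands sit primitively inside it. So discriminant forms alone are consistent with $\disc\Tr(X_6^\bsigma)=44$, and no amount of computing $q_{\Pic}$ from \autoref{eq:picX6} can close this.

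What actually excludes $44$ --- and what your proposal never invokes --- is the geometric fact that $\div(H)=2$ in $\Lambda$, stated in the paper when $H$ is introduced (BBF square $22$, divisibility $2$). Note that in the spurious gluing above, $\Lambda$ would contain an element $w\equiv\tfrac12 H+n^*$ modulo $\Pic(X_6^\bsigma)\oplus N$, and $\q(H,w)=11+(\text{even})$ is odd, so that configuration has $\div(H)=1$: the divisibility is precisely the extra information distinguishing the true embedding from the fake one. The paper's argument is exactly this, phrased via saturation: if $\disc\Tr(X_6^\bsigma)=44$, then $\Tr(X_6^\bsigma)\oplus\inner{H}$ has index $2$ in its saturation $T$ (whose discriminant $242$ was computed in the proof of \autoref{prop:aut}), so $\tfrac12(H+x)\in\Lambda$ for some $x\in\Tr(X_6^\bsigma)$, whence $\q\bigl(H,\tfrac12(H+x)\bigr)=11$, contradicting $\div(H)=2$. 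Your proof becomes correct once you replace the $2$-part-matching claim by this divisibility argument (equivalently: run your glue-group analysis, observe that any nontrivial $\Gamma_2$ must contain $(a,b)$ with $a=[H/2]$, and use $\div(H)=2$ to conclude $\Gamma_2=0$). The remaining steps --- $h(-11)=1$ giving $\Tr(X_6^\bsigma)\simeq L_{11}$, the index count for $T$, the two applications of Nikulin (3) and one of (2), and the identification of both values of $L$ via Nikulin (1) --- are correct and agree with the paper.
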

\begin{proof}
Since $\Pic(X_6^\bsigma)$ has discriminant~$22$, its orthogonal
$H^2_{\trans}(X_6^\bsigma)$ has discriminant
either~$11$ or~$44$. In the second case, the direct sum $H^2_{\trans}(X_6^\bsigma)\oplus
\inner{H}$ would have index~$2$ in its saturation~$T$, so there would exist a
class $x\in H^2_{\trans}(X_6^\bsigma)$ such that~$\frac12(H+x)$ is integral.
However, then we would have $\q\big(H,\frac12(H+x)\big)=11$, contradicting the fact
that $\div(H)=2$.

So~$H^2_{\trans}(X_6^\bsigma)$ has discriminant~$11$. Every rank-$2$ positive definite
lattice has a reduced form (see, for instance,~\cite[Chapter~15.3.2]{conwaysloane}). For discriminant~$11$, the
lattice~$L_{11}$ is the only one that is even. Thus, we may conclude that
$H^2_{\trans}(X_6^\sigma)$ is isomorphic to~$L_{11}$. Since the direct
sum~$H^2_{\trans}(X_6^\bsigma)\oplus\inner{H}$ is primitive, we
have~$T=H^2_{\trans}(X_6^\bsigma)\oplus \inner{H}\simeq L_{11}\oplus(22)$.

Finally we determine the structure of $\Pic(X_6^\bsigma)$. By using (2) and (3)
of Proposition~\ref{prop:nikulin}, it decomposes into a direct sum
\begin{equation}
\label{eq:picX6-iso-type}
\Pic(X_6^\bsigma)\simeq U\oplus E_8(-1)^{\oplus 2}\oplus L(-1),
\end{equation}
where~$L$ is positive definite of rank~$3$ and discriminant~$22$ and also even.
There are two possibilities: either~$L$ is indecomposable, then by
\cite[Table 15.6]{conwaysloane} it is unique and is given by
\[
\begin{pmatrix}2&1&0\\1&2&1\\0&1&8\end{pmatrix};
\]
or~$L$ is decomposable, then it must be the direct sum~$L_{11}\oplus(2)$. By
comparing discriminant forms and using (1) of Proposition~\ref{prop:nikulin}, we may
conclude that the isomorphism \eqref{eq:picX6-iso-type} holds for both values
of $L$.
\end{proof}
We next prove the uniqueness of our special Debarre--Voisin fourfold,
following the same idea of \cite[Theorem~4.2]{dm}.
The following general result is important (see \cite[Example~2.5.9, Section~7.4.4]{monthesis} and~\cite[Theorem~A.3]{dm}).
\begin{theorem}[Mongardi]
\label{thm:possible-T}
Let $X$ be a hyperkähler fourfold of $\KKK^{[2]}$-type that admits a symplectic
automorphism~$g$ of order~$11$. The Picard rank of such a fourfold
is equal to the maximal value $21$.
There are two possibilities for the~$g$-invariant sublattice $T$ of $H^2(X,\bZ)$
\begin{equation}\label{eq:possible-T}
\begin{pmatrix}
2 & 1 & 0\\
1 & 6 & 0\\
0 & 0 & 22
\end{pmatrix}
\quad\text{ or }\quad
\begin{pmatrix}
6 & 2 & 2\\
2 & 8 & -3\\
2 & -3 & 8
\end{pmatrix},
\end{equation}
and the $g$-coinvariant sublattice $T^\perp$ is always isomorphic to an explicitly described lattice $\bS$.
\end{theorem}
Since $\GG$ contains elements of order 11, this states that the lattice $H^\perp$ in our
case must be isomorphic to $\bS$ (see Section~\ref{sage:OD} for the Gram matrix).

We also need the following lemma, which is essentially \cite[Lemma~4.3]{dm}.
\begin{lemma}\label{lemma:uniqueness}
Denote by $\Lambda$ the lattice $H^2(X,\bZ)$ where $X$ is of $\KKK^{[2]}$-type. Let $T$ be either one of the two lattices in \eqref{eq:possible-T}. Up to the action of $\O(\Lambda)$, there is a unique primitive embedding of $T$ into $\Lambda$ such that the orthogonal complement is isomorphic to $\bS$.
\end{lemma}
\begin{proof}
This is a direct application of \cite[Proposition~1.15.1]{nikulin}: since the orthogonal complement $T^\perp$ has been prescribed, up to the action of $\O(\Lambda)$, a primitive embedding of $T$ in $\Lambda$ is equivalent to the data of subgroups $K_T\subset D(T)\simeq \bZ/11\bZ\oplus \bZ/22\bZ$ and $K_\Lambda\subset D(\Lambda)\simeq \bZ/2\bZ$, together with an isometry $K_T\simto K_\Lambda$ for the induced $\bQ/2\bZ$-valued quadratic forms. Moreover, the orthogonal complement has discriminant $484/\operatorname{Card}(K_\Lambda)^2$. Since the lattice $\bS$ has discriminant $121$, we see that $K_T$ must be the $2$-torsion subgroup while $K_\Lambda=D(\Lambda)$, and the isometry $K_T\simto K_\Lambda$ is also unique.
\end{proof}

\begin{proposition}\label{prop:uniqueness}
The fourfold $X_6^\bsigma$ can be characterized as the unique polarized hyperkähler fourfold $(X,H)$ of $\KKK^{[2]}$-type with $H$ of square $22$ and divisibility $2$ that admits a symplectic automorphism of order~$11$ fixing the polarization $H$.
\end{proposition}
\begin{proof}
Let $(X,H)$ be a such pair. Clearly, $H$ lies in the $g$-invariant sublattice $T$. Since the two lattices in \eqref{eq:possible-T} are both positive definite, one can easily verify that the first lattice contains $(0,0,\pm1)$ as the only vectors with square $22$ and divisibility $2$, while the second does not contain any such vector. So one may conclude that $H^2_\trans(X)\simeq L_{11}$, and $T$ is given by $T\simeq L_{11}\oplus(22)$.

Now for two such pairs $(X,H)$ and $(X',H')$, by Lemma~\ref{lemma:uniqueness} there exists an isometry $\phi\colon H^2(X,\bZ)\to H^2(X',\bZ)$ that maps the invariant sublattice $T$ to $T'$. Up to passing to $-\phi$, we may, moreover, assume that $\phi$ maps $H$ to $H'$ and $H^2_\trans(X)$ to $H^2_\trans(X')$. Then $\phi$ is either Hodge or \emph{anti}-Hodge, that is, $\phi(H^{2,0}(X))=H^{0,2}(X')$. In the latter case, we may take the vector $u\in H^2_\trans(X)$ with square $2$ and consider the reflection $R_u$, which reverses the orientation of $H^2_\trans(X)$. The composition $\phi\circ R_u$ would then be a Hodge isometry. So we always get a Hodge isometry mapping $H$ to $H'$.
In other words, for a fixed element $h\in\Lambda$ with square $22$ and divisibility $2$, by picking markings that map both $H$ and $H'$ to $h$, the period points of $X$ and $X'$ will lie in the same $\O(\Lambda,h)$-orbit.
By the global Torelli theorem for polarized hyperkähler manifolds~\cite[Theorem~8.4]{markman} (see also Section~\ref{sec:HLS}), we conclude that $(X,H)$ is isomorphic to $(X',H')$.
\end{proof}
\begin{remark}\leavevmode
\label{rmk:end}
\begin{enumerate}

\item O'Grady~\cite[Theorem~1.8]{ogrady} showed that the modular map $\mathfrak m\colon\MDV\dashrightarrow \cM_{22}^{(2)}$ from the GIT moduli space of trivectors to the moduli space of polarized hyperkähler manifolds is of degree $1$. One can improve on this result to obtain a precise statement on the injectivity (see Proposition~\ref{prop:m_immersion}). Therefore, the uniqueness of $X_6^\bsigma$ up to isomorphism can also be used to deduce the uniqueness of the trivector $\bsigma$ up to linear transformations.

\item In the Introduction we mentioned the existence of two other hyperkähler
fourfolds of $\KKK^{[2]}$-type with a $\GG$-action, constructed using the
variety of lines for the Klein cubic fourfold and a special double EPW sextic
\cite[Section~4.5]{monthesis}; we denote them by $\Fano$ and $\EPW$, respectively, and refer to our example $X_6^\bsigma$ also as $\DV$. The three examples are all distinct, which can
be shown by comparing the transcendental lattices: the transcendental lattice of $\Fano$ is isomorphic to $\begin{psmallmatrix}22
&11\\11&22\end{psmallmatrix}$ \cite[Theorem~1.8]{LazaZheng} (the result is
stated for the cubic so there is a change of sign),
while the transcendental lattice of $\EPW$ is isomorphic
to $(22)^{\oplus 2}$ \cite[Corollary~A.4]{dm}.

\item The uniqueness of the other two examples can be similarly proved using the same argument from Proposition~\ref{prop:uniqueness}. Note that the two possibilities for the invariant sublattice $T$ can also be interpreted as
two twistor families parametrizing pairs $(X,\omega)$, where $X$ is of $\KKK^{[2]}$-type, and $\omega$ is a Kähler class on $X$ invariant by some symplectic automorphism $g$ of order~$11$ (see~\cite[Section~7.4.4]{monthesis}). Hence, $\DV$ and $\EPW$ lie in the same twistor family, while $\Fano$ belongs to another twistor family.

\item In all three cases, the coinvariant sublattice $H^\perp$
is of the same isomorphism type $\bS$ and has discriminant 121. However, the
square of the polarization $H$ takes different values: 22 for $\DV$, 6 for $\Fano$, 2 for $\EPW$.
So for the latter two, the Picard lattice splits as a direct sum $\inner{H}\oplus
\bS$, while for $\DV$ the direct sum $\inner{H}\oplus \bS$ is of index $11$ in $\Pic(X_6^\bsigma)$.
\end{enumerate}
\end{remark}

Recall that a K3 surface with maximal Picard rank $20$ is isolated in the moduli and is uniquely determined by its transcendental lattice, by a result of Shioda and Inose \cite{shioda}.
A similar argument as above shows the following result.
\begin{proposition}\label{prop:S2}
The special Debarre--Voisin fourfold $X_6^\bsigma$ is birationally isomorphic to $S^{[2]}$, where $S$ is the unique K3 surface of maximal Picard rank $20$ with transcendental lattice $L_{11}=\begin{psmallmatrix}2&1\\1&6\end{psmallmatrix}$.
\end{proposition}
\begin{proof}
First, we note that the transcendental lattice of $S^{[2]}$ is also $L_{11}$.
Another application of \cite[Proposition~1.15.1]{nikulin} shows that the primitive embedding of $L_{11}$ into $\Lambda$ is unique up to the action of $\O(\Lambda)$ (see also \cite[Proposition~2.7]{BCS}).
So again, up to composing with the reflection $R_u$ where $u$ has square $2$, there exists a Hodge isometry between $H^2(X_6^\bsigma,\bZ)$ and $H^2(S^{[2]},\bZ)$. The global Torelli theorem then affirms that $X_6^\bsigma$ and $S^{[2]}$ are birationally isomorphic to each other.
\end{proof}

We finish by determining the values $d$ for which $X_6^\bsigma$ is special of discriminant $d$ in the sense of Hassett. Recall from \cite[Section~4]{debmacri} that a polarized hyperkähler fourfold $(X,H)$ of $\KKK^{[2]}$-type is called \emph{Hassett special of discriminant $d$}, if there exists a primitive rank-$2$ sublattice $K\subset \Pic(X)$ containing $H$ such that $\disc(K^\perp)=-d$ (the orthogonal is taken in $H^2(X,\bZ)$). Moreover, one has $\disc(K)=-2d$ when $H$ has divisibility $2$, and $\disc(K)=-d/2$ when $H$ has divisibility $1$. So for example, by Proposition~\ref{prop:D24} we know that $X_6^\bsigma$ is Hassett special of discriminant $24$.

For the special double EPW sextic $\EPW$ and the variety of lines $\Fano$, since their Picard lattices split as a direct sum $\inner{H}\oplus \bS$, it is quite straightforward to determine the possible discriminants (see~\cite[Proposition~A.5]{dm}): the lattice $\bS$ primitively represents all even integers $2k$ for $k\le -2$, so $\EPW$ is Hassett special for all $d\ge 16$ that is a multiple of $8$. Using the same argument, one can check that $\Fano$ is Hassett special for all $d\ge 12$ that is a multiple of $6$.

In our case, since the Picard lattice does not split as a direct sum, the situation is slightly more complicated. We first note that for a hyperkähler fourfold of $\KKK^{[2]}$-type with a polarization of square $22$ and divisibility $2$, the possible discriminants $d$ are the positive even integers, such that $d/2$ is a square modulo 11 and $d\ne 22$ \cite[Proposition~4.1 and Theorem~6.1]{debmacri}.
Moreover, it was shown in \cite{DHOV} and \cite{oberdieck} that the discriminants $2,6,8,10,18$ cannot arise for the Debarre--Voisin construction (see also Section~\ref{sec:HLS}). So a smooth Debarre--Voisin fourfold can only be Hassett special of discriminant $d$ for the following values
\begin{equation}\label{eq:mod22}
\tag{$*$}
d\ge 24 \quad\text{and}\quad d\equiv 0,2,6,8,10,18\pmod{22}.
\end{equation}
In contrast to the cases of $\EPW$ and $\Fano$, the special Debarre--Voisin $X_6^\bsigma$ turns out to be Hassett special for all possible discriminants.
\begin{proposition}\label{prop:disc}
The Debarre--Voisin fourfold $X_6^\bsigma$ is Hassett special for all possible discriminants $d\ge 24$, that is, all integers $d$ satisfying \eqref{eq:mod22}.
\end{proposition}
\begin{proof}
Recall the Gram matrix~\eqref{eq:picX6} for the Picard lattice $\Pic(X_6^\bsigma)$ equipped with the Beauville--Bogomolov--Fujiki form $\q$. We consider a second quadratic form $Q$ given by
\[
Q\colon u\mapsto -\frac12\det\begin{pmatrix}\q(H,H)&\q(H,u)\\\q(H,u)&\q(u,u)\end{pmatrix}=\frac12\q(H,u)^2-11\q(u,u).
\]
If $u$ is primitive, and the sublattice $K=\inner{u}\oplus\inner{H}$ is saturated in $\Pic(X_6^\bsigma)$, then $\disc(K)=-2Q(u)$, so $X_6^\bsigma$ is Hassett special of discriminant $Q(u)$. Hence, we need to show that for all possible $d$, the quadratic form $Q$ primitively represents $d$ with a such vector $u$. We will prove this for $d$ large enough and manually check all the small cases. Note that $Q$ is only positive semi-definite since it vanishes for $H$.

Consider the sublattice $L$ of $\Pic(X_6^\bsigma)$ generated by the following seven elements
\[
\begin{gathered}
D_{0,0},\quad D_{0,1}-D_{1,9},\quad D_{0,5}-D_{1,8},\quad D_{0,6}-D_{1,1},\quad D_{0,7}-D_{1,4},\\
D_{0,4}+D_{0,9}-D_{1,1}+D_{1,2}-D_{1,3}-D_{1,9},\quad D_{1,1}-D_{1,4}+D_{1,5}-D_{1,6}+D_{1,7}-D_{1,8}.
\end{gathered}
\]
Note that none of the elements has a component in $D_{0,2}$.
Recall that $H$ is equal to $D_{0,0}+\cdots +D_{0,10}$.
Hence, by examining the coefficient before $D_{0,2}$, one can see that for any primitive $u\in L$, the sublattice $K=\inner{u}\oplus\inner{H}$ is, indeed, saturated.
Therefore, it suffices to show that $Q|_L$ primitively represents all large enough $d$. One computes that $Q|_L$ is the diagonal form $(24,44,44,44,44,66,66)$ in the given basis. By Lagrange's four-square theorem, all positive integers $44k$ are represented by the form $(44,44,44,44)$. By adding the last two coordinates $(66,66)$, all integers $22k$ with $k\ge 2$ can be primitively represented. Together with the first coordinate, we obtain all integers of the form $24a^2+22k$, for $a\in\set{0,\dots,5}$ and $k\ge 2$. These cover all six residue classes modulo 22, hence $Q|_L$ primitively represents all $d\ge 24\cdot 5^2+44=644$ satisfying \eqref{eq:mod22}. We conclude by manually checking the smaller cases.
\end{proof}

\section{HLS divisors}
\label{sec:HLS}

Proposition~\ref{prop:disc} has a nice implication on the global geometry of the moduli space of Debarre--Voisin fourfolds, in terms of Hassett--Looijenga--Shah (HLS) divisors. We briefly explain some necessary background.

Consider the $20$-dimensional GIT moduli space of trivectors
\[
\MDV\coloneqq\bP(\bw3V_{10}^\vee)\git\SL(V_{10}),
\]
as well as the moduli space $\cM_{22}^{(2)}$ of polarized hyperkähler manifolds with square $22$ and divisibility $2$. The Debarre--Voisin construction gives a rational modular map
\[
\mathfrak m\colon\MDV\dashrightarrow \cM_{22}^{(2)}, \quad [\sigma]\mapsto [X_6^\sigma].
\]
Debarre and Voisin showed that $\m$ is dominant and generically finite, and O'Grady later showed that it is birational. For our purpose, we need the stronger statement that $\m$ is actually an open immersion when restricted to $\MDVsm$, the open locus consisting of classes of trivectors $[\sigma]$ such that the corresponding $X_6^\sigma$ is smooth of dimension $4$ (recall from Lemma~\ref{lemma:smoothness-X3-X6} that this is precisely equivalent to the fact that $[\sigma]\in \bP(\bw3V_{10}^\vee)$ does not lie on the projective dual $\Delta$ of $\Gr(3,V_{10})$, also known as the discriminant hypersurface).
In fact, this will follow without much difficulty from the same argument in \cite{dv}. We first prove the following intermediate results.

\begin{proposition}
Denote by $G$ the Grassmannian $\Gr(6,V_{10})$ and let $\cU$ and $\cQ$
be the tautological subbundle and quotient bundle on $G$, respectively.
Let $\sigma$ be a trivector such that $X=X_6^\sigma$ is a smooth Debarre--Voisin fourfold. In other words, we assume that $[\sigma]$ does not lie on the discriminant hypersurface $\Delta\subset \bP(\bw3V_{10}^\vee)$.
\begin{enumerate}
\item\label{item:UQ-simple} The restrictions $\cU|_X$ and $\cQ|_X$ are both simple, that is, $\End(\cU|_X)\simeq\End(\cQ|_X)\simeq\bC$. We also have $h^0(X, \cU^\vee|_X)=10$.
\item\label{item:kernel-stab} Denote by $\Stab(\sigma)$ the stabilizer subgroup of $\sigma$ in $\SL(V_{10})$. Consider the natural homomorphism
\[
\Phi\colon\Stab(\sigma)\to \Aut(X_6^\sigma),
\]
which maps each $\varphi\in \SL(V_{10})$ to the induced automorphism $\Phi(\varphi)$ on $X_6^\sigma$. Then the kernel of the map is equal to $\setmid{\lambda \Id}{\lambda^{10}=1}$.
\item\label{item:sigma_stable} The trivector $\sigma$ is stable with respect to the $\SL(V_{10})$-action. In other words, it has a finite stabilizer subgroup $\Stab(\sigma)$, and its (affine) orbit $O(\sigma)$ in $\bw3 V_{10}^\vee$ is closed.
\end{enumerate}
\end{proposition}

\begin{proof}
Statement \eqref{item:UQ-simple} follows from a standard computation using the Koszul complex and the Borel--Weil--Bott theorem. Namely, let $\cF$ be the vector bundle $\bw3\cU^\vee$ of rank $20$ on $G$ and consider the Koszul complex
\[
0\to \bw{20}\cF^\vee\to\cdots\to\bw2\cF^\vee\to\cF^\vee\to\cO_G\to\cO_X\to 0,
\]
which is a free resolution of the structure sheaf of $\cO_X$. Given a vector bundle $\cE$ on $G$, we tensor the complex with $\cE$ and obtain a spectral sequence
\[
E_1^{-q,p}\coloneqq H^p(G,\cE\otimes\bw q \cF^\vee)\Rightarrow H^{p-q}(X,\cE|_X).
\]
We first take $\cE$ to be $\cU^\vee\otimes \cU$ to illustrate the idea.
Each term $H^p(G,\cU^\vee\otimes\cU\otimes\bw q\cF^\vee)$ in the spectral sequence can be computed using the Borel--Weil--Bott theorem, and
one may verify that there are only three terms that are non-zero
\[
h^0(G,\cE)=h^{24}(G,\cE\otimes\det\cF)=1,\quad
h^{12}(G,\cE\otimes \bw{10}\cF)=101.
\]
In particular, the spectral sequence degenerates at the first page, so we conclude that
\[
h^0\big(X,\cE|_X\big)=h^4\big(X,\cE|_X\big)=1,\quad
h^2\big(X,\cE|_X\big)=101,
\]
while $h^1=h^3=0$. Hence, $\dim \End(\cU|_X)=1$ and $\cU|_X$ is, indeed, simple.

Similarly, we can take $\cE$ to be $\cQ^\vee\otimes \cQ$ and $\cU^\vee$ and carry out the same computation for the other two claims.

For statement \eqref{item:kernel-stab}, suppose that $\varphi\in \SL(V_{10})$ induces the trivial automorphism on $X=X_6^\sigma$. Then it will also induce an automorphism
\[
f_\varphi\in \End(\cU|_X),
\]
where the action is fiberwise. However, since the vector bundle $\cU|_X$ is simple, up to multiplying by a non-zero scalar, $f_\varphi$ must be the identity map. In other words, $\varphi$ acts as the identity on each $\bP(V_6)$ for $[V_6]\in X$.

To conclude, note that since $h^0(X,\cU^\vee|_X)=10$, all the six-dimensional vector spaces $[V_6]\in X$ span the entire $V_{10}$, so $\varphi$ acts as the identity on $\bP(V_{10})$ and is, therefore, of the form $\lambda\Id$ for some $\lambda$ with $\det(\lambda\Id)=\lambda^{10}=1$.

Finally, we prove \eqref{item:sigma_stable}. For any $[\sigma]\in \bP(\bw3V_{10}^\vee)$ not lying on the discriminant $\Delta$, since $X_6^\sigma$ is a hyperkähler fourfold, its automorphism group is discrete, hence so is $\Stab(\sigma)$. However, $\Stab(\sigma)$ is an algebraic subgroup, so it is necessarily finite. Consequently, the affine $\SL(V_{10})$-orbit $O(\sigma)\subset\bw3V_{10}^\vee$ has the expected codimension $120-99=21$.

We claim that the orbit $O(\sigma)$ is closed. Suppose that this is not the case, then the closure $\overline{O(\sigma)}$ contains another orbit $O(\sigma')$, which must have higher codimension. In particular, $[\sigma']$ must necessarily lie in $\Delta$. On the other hand, $\Delta$ is defined by the discriminant, an $\SL(V_{10})$-invariant polynomial that is constant on each affine orbit. This implies that it vanishes at $\sigma'$, and, hence, at $\sigma$ as well, which is a contradiction.
\end{proof}
Note that the last point \eqref{item:sigma_stable} ensures that the map
\[
\pi \colon \bP(\bw3V_{10}^\vee)\setminus\Delta \to \MDV
\]
is a geometric quotient, and we denote its image by $\MDVsm$. The modular map $\m$ is regular on the open locus $\MDVsm$.

The following is proved in \cite[Lemma~4.6]{dv}.
\begin{lemma}[Debarre--Voisin]
\label{lemma:diff_m}
Write $(X,H)$ for a Debarre--Voisin fourfold $X_6^\sigma$ and its canonical polarization. Whenever
$X$ is of dimension $4$, any first-order deformation of the pair $(X, H)$ is
given by a deformation of $\sigma$. More precisely, the Kodaira--Spencer map
\[
\KS\colon \bw3V_{10}^\vee /\inner\sigma\to \Def_{(X,H)}(\bC[\varepsilon])
\simeq \Ext^1\big(\cP_{X,H},\cO_X\big)
\]
is surjective, where the bundle $\cP_{X,H}$ is the extension
\[
0\to \Omega_X\to \cP_{X,H}\to \cO_X\to 0
\]
given by $c_1(H)\in \Ext^1(\cO_X,\Omega_X)$.
\end{lemma}

\begin{proposition}\label{prop:m_immersion}
The modular map $\m$ when restricted to $\MDVsm$ induces an open immersion
\[
\m\colon \MDVsm\into \cM_{22}^{(2)}.
\]
\end{proposition}
\begin{proof}
We consider the composition map
\[
\bP(\bw3 V_{10}^\vee)\setminus \Delta \stackrel\pi{\to} \MDVsm \stackrel\m{\to} \cM_{22}^{(2)}.
\]
Lemma~\ref{lemma:diff_m} states precisely that the differential of the map $\m\circ \pi$ is everywhere surjective.

We claim that the restriction $\m|_{\MDVsm}$ is quasi-finite. Otherwise, suppose that a curve $C$ gets contracted; then for each point $[\sigma]$ in $C$, its preimage by $\pi$ in $\bP(\bw3V_{10}^\vee)$ is the $\SL(V_{10})$-orbit of $\sigma$, which has expected codimension $20$. Thus, the entire preimage of $C$ by $\pi$ has codimension $19$ and is contracted by $\m\circ\pi$. This contradicts the surjectivity of the differential.

O'Grady showed that $\m$ is birational \cite[Theorem~1.8]{ogrady}. Moreover, the target space $\cM_{22}^{(2)}$ is normal (this follows from, for example, the normality of the period domain $\cP_{22}^{(2)}$ and the global Torelli theorem, see below). Hence, by Zariski's Main Theorem, $\m|_{\MDVsm}$ is an open immersion.
\end{proof}

We now recall a few notions on the period domain and the global Torelli theorem.
Let $(\Lambda,q)$ be the lattice $H^2(X,\bZ)$ and fix an element $h\in \Lambda$ with square $22$ and divisibility $2$. The period domain $\cP_{22}^{(2)}$ is defined as the quotient of the domain
\[
\setmid{[x]\in \bP(\Lambda\otimes \bC)}{q(x,x)=q(x,h)=0,q(x,\overline{x}) >0},
\]
which has two connected components, by the action of $\O(\Lambda, h)$. It is a normal quasi-projective variety by the Baily--Borel theory. For an element $\kappa \in h^\perp$ with negative square, one defines the associated Heegner divisor to be the hypersurface in the period domain induced by the hyperplane $\kappa^\perp$. It is said to be of discriminant $d$ if $\disc(\kappa^\perp)=-d$ (the orthogonal complement is taken in $h^\perp$). The Heegner divisor $\cD_d$ is non-empty if and only if $d$ is a positive even integer, such that $d/2$ is a square modulo $11$, and, in this case, it is always irreducible~\cite[Proposition~4.1]{debmacri}.
Note that the period point of a Debarre--Voisin fourfold $X_6^\sigma$ lies on the Heegner divisor $\cD_d$ precisely when
it is Hassett special of discriminant $d$.

The global Torelli theorem for polarized hyperkähler manifolds \cite[Theorem~8.4]{markman} states that the period map
\[
\p\colon \cM_{22}^{(2)}\to\cP_{22}^{(2)}
\]
is an open immersion, and the image is the complement of the Heegner divisor $\cD_{22}$ \cite[Theorem~6.1]{debmacri}. One may consider the composition $\p\circ \m$
and resolve its indeterminacies by passing to the Baily--Borel compactification $\overline{\cP_{22}^{(2)}}$ of the period domain (whose boundary has dimension $1$)
\[
\begin{tikzcd}
\MDV\ar[r,dashed,"\p\circ\m"] & \cP_{22}^{(2)}\ar[d,hook]\\
\tilde{\MDV}\ar[u,"\varepsilon"]\ar[r, "\tilde\p"] & \overline{\cP_{22}^{(2)}}.
\end{tikzcd}
\]
In this way we obtain a birational morphism $\tilde\p$.
\begin{definition}\label{def:HLS}
An irreducible divisor in $\cP_{22}^{(2)}$ (or $\cM_{22}^{(2)}$) is called \emph{Hassett--Looijenga--Shah} (HLS) if its closure in $\overline{\cP_{22}^{(2)}}$ is the image of an exceptional divisor of $\varepsilon$ by the extended period map $\tilde\p$.
\end{definition}

The Heegner divisors $\cD_d$ for $d\in\set{2,6,8,10,18}$ are HLS: their geometric description was first studied for all cases except $d=8$ by Debarre, Han, O'Grady, and Voisin \cite{DHOV}, and Oberdieck later provided another proof using Gromov--Witten techniques \cite{oberdieck}. Moreover, it was remarked in \cite{oberdieck} that it is possible to check that these are the only five HLS Heegner divisors. Proposition~\ref{prop:disc} now gives us an alternative proof for this last statement.

\begin{corollary}
\label{cor:HLS}
An HLS divisor does not contain the period point of any smooth Debarre--Voisin fourfold.
Consequently, a Heegner divisor $\cD_d$ is HLS if and only if $d\in\set{2,6,8,10,18}$.
\end{corollary}
\begin{proof}
Suppose that $\cD$ is an HLS divisor containing the period point of a smooth Debarre--Voisin fourfold. Then the intersection $\cD\cap \MDVsm$ is non-empty, and the preimage of $\cD$ by $\tilde\p$ contains its strict transform $\cD'$ in $\tilde{\MDV}$.

However, by definition there is an exceptional divisor $\cD''$ of $\varepsilon$ that dominates $\cD$ via $\tilde\p$. The image $\varepsilon(\cD'')$ does not intersect $\MDVsm$ where $\m$ is regular so $\cD''\ne \cD'$. Hence, the preimage of $\cD$ contains at least two irreducible divisors, both dominating $\cD$. Then the generic point of $\cD$ would have at least two preimages, contradicting the normality of the period domain.

To conclude, Proposition~\ref{prop:disc} implies that for any possible $d\ge 24$, the Heegner divisor $\cD_d$ contains the period point of $X_6^\bsigma$. It is, therefore, not HLS.
\end{proof}

\appendix

\section{Macaulay2 and Sage code}

We provide the computer algebra code used in the note. Only the last
computation for the group of isometries is done in {\tt Sage}, all the others
are in \Macaulay{}.

\subsection{Proposition~\ref{prop:smoothness}: the smoothness of
\texorpdfstring{$X_3^\bsigma$}{X3}}
\label{m2:smoothness}
The following verifies the smoothness of~$X_3^\bsigma$ by using the Jacobian
criterion on each affine chart $\bA^{21}$ of $\Gr(3,V_{10})$.
\begin{lstlisting}[language=Macaulay2]
-- the trivector has ten components
comps = {(0,2,5),(1,3,6),(2,4,7),(3,0,8),(4,1,9),
         (0,9,7),(1,5,8),(2,6,9),(3,7,5),(4,8,6)};
sigma = (u,v,w) -> sum(for idx in comps list det submatrix(u||v||w, idx));
-- check that the hyperplane section is smooth on each chart of Gr(3,10)
S = QQ[g_0..g_20]; -- each chart has 21 coordinates
I3 = id_(S^3); M = genericMatrix(S,3,7);
-- generate the coordinates matrix for the chart ijk
chart = (i,j,k) ->
    M_{0..i-1}|I3_{0}|M_{i..j-2}|I3_{1}|M_{j-1..k-3}|I3_{2}|M_{k-2..6};
isSmooth = true;
for ijk in subsets(10,3) do (Mijk = chart toSequence ijk;
    if dim singularLocus ideal sigma(Mijk^{0},Mijk^{1},Mijk^{2}) >= 0
        then isSmooth = false;);
assert(isSmooth);
\end{lstlisting}
\subsection{Ideals generated by Pfaffians}
\label{m2:ideals}
The following computes the ideals generated by Pfaffians of $\bsigma$ seen as a
$10\times 10$ skew-symmetric matrix.
\begin{lstlisting}[language=Macaulay2]
-- we study the singular locus of the Peskine X1 in P^9
S = QQ[x_0..x_9];
-- compute the skew-symmetric matrix of sigma
delta = (x,y,ex) -> (table(10,10,(i,j) -> if i==x and j==y then ex else 0));
skew = (i,j,k) -> (delta(i,j,x_k)+delta(j,k,x_i)+delta(k,i,x_j)
                  -delta(j,i,x_k)-delta(k,j,x_i)-delta(i,k,x_j));
sigmaskew = matrix sum(for idx in comps list skew(idx));
I2 = trim pfaffians_4 sigmaskew; -- irrelevant ideal
I4 = trim pfaffians_6 sigmaskew; -- 55 points
I6 = trim pfaffians_8 sigmaskew; -- the Peskine X1
<< "{rank<=2}: dim = " << dim I2-1 << ", deg = " << degree I2 << endl;
<< "{rank<=4}: dim = " << dim I4-1 << ", deg = " << degree I4 << endl;
<< "{rank<=6}: dim = " << dim I6-1 << ", deg = " << degree I6 << endl;
\end{lstlisting}
The output is the following.
\begin{lstlisting}
{rank<=2}: dim = -1, deg = 11
{rank<=4}: dim = 0, deg = 55
{rank<=6}: dim = 6, deg = 15
\end{lstlisting}

\subsection{Proposition~\ref{prop:55-points}: the 55 isolated singular points}
\label{m2:55-points}
We first compute the coordinates of the 55 isolated singular points directly
using the {\tt RationalPoints2} package.
\begin{lstlisting}[language=Macaulay2]
needsPackage "RationalPoints2";
assert(#unique rationalPoints(I4, Projective=>true, Split=>true) == 55);
\end{lstlisting}
Then we perform the step-by-step procedure as explained in the proof of
Proposition~\ref{prop:55-points}.
\begin{lstlisting}[language=Macaulay2]
-- use the hyperplane x_0+x_1+x_2+x_3+x_4 to identify 5 points
fivePts = I4 : (I4 : (x_0+x_1+x_2+x_3+x_4));
-- get a degree 5 polynomial in x_0 and x_1 using elimination
pol5 = (gens gb sub(fivePts, QQ[x_0..x_9, Weights=>(2:0)|(8:1)]))_(0,0);
-- the polynomial will split in Q(zeta) so we take a field extension
F = toField(QQ[z]/((z^11-1)//(z-1))); S' = F[x_0..x_9];
root = ideal(x_1 - (z^7+z^6+z^5+z^4) * x_0); -- take a root
assert zero(sub(pol5, S') % root); -- verify that it is a root
fivePts' = sub(fivePts, S');
Ip = saturate(fivePts', saturate(fivePts', root)); -- ideal of one of the points
-- we recover the coordinates of p by solving a linear system in x_0,...,x_9
coeffs = f->first entries transpose(coefficients(f, Monomials=>gens S'))_1;
mat = matrix({{1,9:0}}|apply(first entries gens Ip, coeffs));
p = first entries transpose sub((inverse mat)_{0}, F);
-- finally compute the orbit of p
coord = p -> apply(p, x->x//p_0); -- compute the coordinate (1:x_1:x_2:...:x_9)
Peigen = (for a in (10,2,7,8,6,5,1,9,4,3) list z^a);
P = (j, p) -> coord apply(10, i->p_i*Peigen_i^j); -- P acts by scaling
R = p -> coord{p_1,p_2,p_3,p_4,p_0,p_6,p_7,p_8,p_9,p_5}; -- R acts by permuting
pts = flatten apply({p, R p, R R p, R R R p, R R R R p}, p->apply(11, j -> P_j p));
assert(#unique pts == 55); -- G acts transitively on all 55 singular points
\end{lstlisting}
\subsection{Proposition~\ref{prop:intersection-number}: the Beauville--Bogomolov--Fujiki form}
\label{m2:BBF}
The following verifies the intersection numbers $D\cdot D'\cdot H^2$ for the
two divisors $D$ and $D'$ in the two cases that are studied in the proof of
Proposition~\ref{prop:intersection-number}.
\begin{lstlisting}[language=Macaulay2]
needsPackage "Schubert2";
-- first case: D intersect D' is a K3 of degree 30
G = flagBundle{2,2}; U1 = first bundles G;
GxG = flagBundle({2,2}, 4*OO_G); U2 = first bundles GxG;
X = sectionZeroLocus dual(det U1*(1+U2) + det U2*(1+U1));
(h1, h2) = chern_1 \ (dual U1*OO_X, dual U2*OO_X);
print (integral \ (h1^2, h1*h2, h2^2)); -- (6, 9, 6)
-- second case: D intersect D' is a surface of degree 8
G = flagBundle{2,1}; (U,Q) = bundles G;
P1 = flagBundle({1,2}, 2+Q); (U1,Q1) = bundles P1;
P2 = flagBundle({1,2}, 2+Q*OO_P1); (U2,Q2) = bundles P2;
X = sectionZeroLocus dual(det U*(U1+U2)+U*U1*U2);
h = chern_1 (dual(U+U1+U2)*OO_X);
print integral h^2; -- 8
\end{lstlisting}
\subsection{The Picard group}
\label{m2:pic}
The following computes the Gram matrix for the 21 classes $D_{0,0},\dots,
D_{0,10},D_{1,0},\dots,D_{1,9}$ using Proposition~\ref{prop:intersection-number}.
\begin{lstlisting}[language=Macaulay2]
M21 = matrix table(21,21,(i,j)->(if i == j then -2 else
  if sigma(matrix{pts_i},matrix{pts_j},genericMatrix(S',1,10))==0 then 1 else 0));
\end{lstlisting}
\subsection{Lemma~\ref{lemma:OD}: the group of isometries}
\label{sage:OD}
The Gram matrix of $H^\perp$ is
\[
\begin{psmallmatrix}
-6& -2& -4& -2& -3& -3& -3& -4& -2& -4& -3& -3& -4& -2& -3& -3& -3& -3& -3& -4\\
-2& -4& -2& -2& -1& -2& -2& -3& -2& -2& -1& -2& -3& -2& -1& -2& -2& -2& -2& -3\\
-4& -2& -6& -2& -3& -2& -3& -4& -3& -4& -3& -2& -4& -3& -3& -2& -3& -3& -3& -4\\
-2& -2& -2& -4& -1& -2& -1& -3& -2& -3& -2& -2& -2& -2& -2& -2& -1& -2& -2& -3\\
-3& -1& -3& -1& -4& -1& -2& -2& -2& -3& -2& -2& -3& -1& -2& -2& -2& -1& -2& -3\\
-3& -2& -2& -2& -1& -4& -1& -3& -1& -3& -2& -2& -3& -2& -1& -2& -2& -2& -1& -3\\
-3& -2& -3& -1& -2& -1& -4& -2& -2& -2& -2& -2& -3& -2& -2& -1& -2& -2& -2& -2\\
-4& -3& -4& -3& -2& -3& -2& -6& -2& -4& -3& -3& -4& -3& -3& -3& -2& -3& -3& -4\\
-2& -2& -3& -2& -2& -1& -2& -2& -4& -2& -1& -2& -3& -2& -2& -2& -2& -1& -2& -3\\
-4& -2& -4& -3& -3& -3& -2& -4& -2& -6& -3& -2& -4& -3& -3& -3& -3& -3& -2& -4\\
-3& -1& -3& -2& -2& -2& -2& -3& -1& -3& -4& -2& -2& -2& -2& -1& -1& -2& -2& -2\\
-3& -2& -2& -2& -2& -2& -2& -3& -2& -2& -2& -4& -3& -1& -2& -2& -1& -1& -2& -3\\
-4& -3& -4& -2& -3& -3& -3& -4& -3& -4& -2& -3& -6& -3& -2& -3& -3& -2& -2& -4\\
-2& -2& -3& -2& -1& -2& -2& -3& -2& -3& -2& -1& -3& -4& -2& -1& -2& -2& -1& -2\\
-3& -1& -3& -2& -2& -1& -2& -3& -2& -3& -2& -2& -2& -2& -4& -2& -1& -2& -2& -2\\
-3& -2& -2& -2& -2& -2& -1& -3& -2& -3& -1& -2& -3& -1& -2& -4& -2& -1& -2& -3\\
-3& -2& -3& -1& -2& -2& -2& -2& -2& -3& -1& -1& -3& -2& -1& -2& -4& -2& -1& -3\\
-3& -2& -3& -2& -1& -2& -2& -3& -1& -3& -2& -1& -2& -2& -2& -1& -2& -4& -2& -2\\
-3& -2& -3& -2& -2& -1& -2& -3& -2& -2& -2& -2& -2& -1& -2& -2& -1& -2& -4& -3\\
-4& -3& -4& -3& -3& -3& -2& -4& -3& -4& -2& -3& -4& -2& -2& -3& -3& -2& -3& -6
\end{psmallmatrix}
\]
which has determinant~121. The following verifies that the group of isometries
$\tilde\O(H^\perp)$ is isomorphic to $\GG$.
The variable {\tt M} in the code stands for the above Gram matrix.

\begin{lstlisting}[language=Python]
L = IntegralLattice(M)
OL = L.automorphisms()
D = L.discriminant_group()
OD = D.orthogonal_group()
a, b = L.dual_lattice().gens()[0:2]
u, v = [4*a+9*b, 9*a+8*b]
r = L.dual_lattice().hom(D)
i = OL.Hom(OD)([matrix((r(u*g), r(v*g))) for g in OL.gens()])
G = i.kernel()
print(G.structure_description()) # PSL(2,11)
\end{lstlisting}

One may then proceed to compute the character of this $\GG$-representation and
finds that $(H^\perp)_\bC=(V_{10}')^{\oplus2}$ (which is, in fact, defined over
$\bQ$).

\begin{lstlisting}[language=Python]
ch = G.character(matrix(x).trace() for x in G.conjugacy_classes_representatives())
[(m,c.values()) for (m,c) in ch.decompose()] # [(2, [10, 2, 1, 0, 0, -1, -1, -1])]
\end{lstlisting}

\section{Representation theory of \texorpdfstring{$\GG$}{G}}
\label{sec:appendix}
Below is the character table for the irreducible complex representations of
$\GG$, which can be easily computed in {\tt Sage} (one can also refer to
\cite{atlas}).
\begin{table}[ht]
\scalebox{.8}{
\def\arraystretch{1.5}
$\begin{array}{|r|cccccccc|}
\hline
\text{Conj.\ class}&[\Id]
&\left[\begin{psmallmatrix}1&1\\0&1\end{psmallmatrix}\right]
&\left[\begin{psmallmatrix}1&2\\0&1\end{psmallmatrix}\right]
&\left[\begin{psmallmatrix}0&-1\\1&0\end{psmallmatrix}\right]
&\left[\begin{psmallmatrix}0&1\\-1&-1\end{psmallmatrix}\right]
&\left[\begin{psmallmatrix}2&-2\\2&4\end{psmallmatrix}\right]
&\left[\begin{psmallmatrix}4&0\\0&3\end{psmallmatrix}\right]
&\left[\begin{psmallmatrix}5&0\\0&9\end{psmallmatrix}\right]\\
\text{Size}& 1   & 60 & 60 & 55 & 110 & 110 & 132 & 132             \\
\hline
\bC    & 1   & 1  & 1  & 1  & 1   & 1   & 1               & 1               \\
V_5    & 5  &\frac12\sqrt{-11}-\frac12&-\frac12\sqrt{-11}-\frac12&1&-1&1&0&0\\
V_5^\vee & 5  &-\frac12\sqrt{-11}-\frac12&\frac12\sqrt{-11}-\frac12&1&-1&1&0&0\\
V_{10}  & 10  & -1 & -1 & -2 & 1   & 1   & 0               & 0               \\
V_{10}' & 10  & -1 & -1 & 2  & 1   & -1  & 0               & 0               \\
V_{11}  & 11  & 0  & 0  & -1 & -1  & -1  & 1               & 1               \\
V_{12}  & 12  & 1  & 1  & 0  & 0   & 0   & \frac12\sqrt5-\frac12 &-\frac12\sqrt5-\frac12 \\
V_{12}' & 12  & 1  & 1  & 0  & 0   & 0   &-\frac12\sqrt5-\frac12 & \frac12\sqrt5-\frac12 \\
\hline
\bw3V_{10}^\vee&120&-1&-1&8&3&-1&0&0\\
\hline
\end{array}$}
\caption{\label{tbl:PSL-char}Character table of $\GG=\PSL(2,\bF_{11})$}
\end{table}



\begin{thebibliography}{DHOV20}

\bibitem[Adl78]{adl}  Adler, A., On the automorphism group of a certain cubic
threefold, {\it Amer.\ J.\ Math.} {\bf 100} (1978), 1275--1280.

\bibitem[ATLAS]{atlas} Wilson, R.\ {\it et al.}, ATLAS of Finite Group
Representations, online database, available at
\url{http://brauer.maths.qmul.ac.uk/Atlas/v3/}.

\bibitem[BS21]{bs} Benedetti, V., Song, J., Divisors in the moduli space of
Debarre--Voisin varieties, (2021), eprint
\href{http://arxiv.org/abs/2106.06859}{\tt arXiv:2106.06859}.

\bibitem[BCS16]{BCS} Boissière, S., Camere, C., Sarti, A., Classification of automorphisms on a deformation family of hyper-Kähler four-folds by $p$-elementary lattices, {\it Kyoto J. Math.} {\bf 56} (2016), 465--499.

\bibitem[CS98]{conwaysloane} Conway, J., Sloane, N., {\it Sphere packings,
lattices, and groups}, Grundlehren der mathematischen Wissenschaften {\bf 290},
Springer-Verlag, 1998.

\bibitem[DHOV20]{DHOV} Debarre, O., Han, F., O'Grady, K., Voisin, C., Hilbert
squares of K3 surfaces and Debarre--Voisin varieties, {\it J.\ Éc.\ polytech.\
Math.} {\bf 7} (2020), 653--710.

\bibitem[DM19]{debmacri} Debarre, O., Macrì, E., On the period map for polarized hyperkähler
  fourfolds, {\it Int.\ Math.\ Res.\ Not.\ IMRN.} {\bf 2019} (2019), 6887--6923.

\bibitem[DM21]{dm} Debarre, O., Mongardi, G., Gushel-Mukai varieties with
many symmetries and an explicit irrational Gushel-Mukai threefold, (2021),
eprint \href{http://arxiv.org/abs/2101.01959}{\tt arXiv:2101.01959}.

\bibitem[DV10]{dv} Debarre, O., Voisin, C., Hyper-Kähler fourfolds and
Grassmann geometry, {\it J.\ reine angew.\ Math.} {\bf 649} (2010), 63--87.

\bibitem[GS]{m2}Grayson, D., Stillman, M., Macaulay2, a software system for
research in algebraic geometry, available at
\url{http://www.math.uiuc.edu/Macaulay2/}.

\bibitem[GHS10]{ghs} Gritsenko, V., Hulek, K., Sankaran, G.K., Moduli spaces of
irreducible symplectic manifolds, {\it Compos.\ Math.} {\bf 146} (2010),
404--434.


\bibitem[LZ22]{LazaZheng} Laza, R., Zheng, Z., Automorphisms and periods of cubic
fourfolds, {\it Math. Z.} {\bf 300} (2022), 1455--1507.

\bibitem[Mar11]{markman} Markman, E.,
A survey of Torelli and monodromy results for holomorphic-symplectic
varieties, in {\it Complex and differential geometry}, 257--322, Springer Proc.\ Math.\ 8,
Springer, Berlin, Heidelberg, 2011.

\bibitem[Mon13]{monthesis} Mongardi, G., Automorphisms of hyperkähler manifolds,
PhD thesis, Università Roma Tre, 2013.

\bibitem[Muk88]{Mukai:AutK3} Mukai, S., Finite groups of automorphisms of K3
surfaces and the Mathieu group. {\it Invent. Math.} {\bf 94} (1988), 183--221.

\bibitem[Nik79]{nikulin} Nikulin, V., Integral symmetric bilinear forms and some of
their geometric applications, {\it Izv.\ Akad.\ Nauk SSSR Ser.\ Mat.} {\bf 43}
(1979), 111--177.\ English transl.: {\it Math.\ USSR Izv.} {\bf 14} (1980),
103--167.

\bibitem[Obe21]{oberdieck} Oberdieck, G., Gromov--Witten theory and
Noether--Lefschetz theory for holomorphic-symplectic varieties, eprint
\href{https://arxiv.org/abs/2102.11622}{\tt arXiv:2102.11622}, 2021.


\bibitem[O'G19]{ogrady} O'Grady, K., Modular sheaves on hyperkähler varieties, (2019),
eprint \href{http://arxiv.org/abs/1912.02659}{\tt arXiv:1912.02659}.


\bibitem[Sage]{sage} Stein, W.\ {\it et al.}, Sage Mathematics Software (version
9.3), available at \url{https://www.sagemath.org/}.

\bibitem[SI77]{shioda} Shioda, T., Inose, H., On Singular K3 Surfaces, in {\it Complex analysis and algebraic geometry: A collection of papers dedicated to K. Kodaira}, 119--136, Cambridge University Press, 1977.


\end{thebibliography}
\end{document}